\def\br#1\er{\textcolor{red}{#1}} 
\newcommand{\Hm}{\overrightarrow H}
\theoremstyle{definition}
\newtheorem{definition}{Definition}[section]
\begin{document}
\title{On complete trapped submanifolds in Globally Hyperbolic spacetimes}
\author[A.L. Albujer]{Alma L. Albujer} \address{Departamento de
  Matemáticas, Edificio Albert Einstein\hfill\break\indent Universidad
  de Córdoba, Campus de Rabanales,\hfill\break\indent 14071 Córdoba,
  Spain}

\author[J. Herrera]{J\'onatan Herrera} \address{Departamento de
  Matemáticas, Edificio Albert Einstein\hfill\break\indent Universidad
  de Córdoba, Campus de Rabanales,\hfill\break\indent 14071 Córdoba,
  Spain}

\author[R. Rubio]{Rafael M. Rubio} \address{Departamento de
  Matemáticas, Edificio Albert Einstein\hfill\break\indent Universidad
  de Córdoba, Campus de Rabanales,\hfill\break\indent 14071 Córdoba,
  Spain}

\newtheorem{thm}{Theorem}[section]
\newtheorem{theorem}{Theorem}[section]
\newtheorem{proposition}[thm]{Proposition} \newtheorem{lemma}[thm]{Lemma}
\newtheorem{corollary}[thm]{Corollary} \newtheorem{conv}[thm]{Convention}
\theoremstyle{definition} \newtheorem{defi}[thm]{Definition}
\newtheorem{notation}[thm]{Notation} \newtheorem{exe}[thm]{Example}
\newtheorem{conjecture}[thm]{Conjecture} \newtheorem{prob}[thm]{Problem}
\newtheorem{remark}[thm]{Remark}
\newtheorem{example}[thm]{Example}
\newcommand{\nablat}{\overline{\nabla}}
\renewcommand{\div}{\mathrm{div}}
\begin{abstract}
	 {The aim of this manuscript is to obtain rigidity and non-existence results for parabolic spacelike submanifolds with causal mean curvature vector field in orthogonally splitted spacetimes, and in particular, in globally hyperbolic spacetimes. We also obtain results regarding the geometry of submanifolds by ensuring, under some mild hypothesis, the non-existence of local minima or maxima of certain distinguished function. Furthermore, in this last case the submanifold does not need to be parabolic or even complete.}
	
	{As an application in General Relativity, we obtain several nice results regarding (non-necessarily closed) trapped surfaces in a huge family of spacetimes. In fact, we show how our technique allows us to recover some relevant previous results for trapped surfaces in both, standard static spacetimes and Generalized Robertson-Walker spacetimes.} 
\end{abstract}

\keywords{Orthogonally-splitted spacetimes, globally hyperbolic spacetimes, trapped surfaces, parabolic submanifolds}

\maketitle
\usetikzlibrary{matrix}

\section{Introduction}

{In this work, we examine submanifolds with causal mean curvature vector in a broad class of Lorentzian manifolds, known as \emph{orthogonally-splitted} spacetimes.}

On the one hand, orthogonally-splitted spacetimes (see \eqref{eq:1}) encompass several important families of spacetimes including, as noted by the classic result of Bernal and Sánchez \cite{BS05}, any globally hyperbolic spacetime. However, not all orthogonally-splitted spacetimes should be globally hyperbolic. For instance, standard static spacetimes also fall under this category, as well as Generalized Robertson-Walker (GRW for short) spacetimes or, more generally, the multiwarped spacetimes. Therefore, a wide variety of well-known cosmological models, such as Friedmann-Lem\^aitre-Robertson-Walker spacetimes, (Anti-)de Sitter Schwarszchild spacetimes and Kasner models (among others), can be seen as particular cases of orthogonally-splitted spacetimes.

{On the other hand, spacelike submanifolds} with causal mean curvature vector have not only immense geometric interest, but an indisputable importance from a physics standpoint. So, for example, trapped surfaces
are nowadays a very intense
field of study. As it is said in \cite{MarsSeno}, their applications in General Relativity are ubiquitous: 

\begin{quote} {\it
		To mention just a few outstanding situtations where it [the concept of trapped surface] has been essential, we can cite the development of the singularity theorems, the general analysis of gravitational collapse and formation of trapped surfaces and black holes, the study of the cosmic censorship hypothesis and the related Penrose inequality, or the numerical analysis of the Cauchy development of apparently innocuous initial data}. 
	
\end{quote}

The concept of closed (compact without boundary) trapped surfaces, was originally introduced by Penrose in 1965 for the case of 2-dimensional spacelike surfaces embedded in $4$-dimensional spacetimes in terms of the signs or the vanishing of the so-called null expansions (see \cite{HP,Pe}), and it has remained that way for many years. Note that Penrose's definition implies in particular that a trapped surface is a topological sphere and consequently compactness is required.
{With that notion of trapped surface, Hawking and Penrose were able to show the existence of singularities (i.e, of incomplete timelike geodesics) in the evolution of a Cauchy initial data set containing a trapped surface (under some additional and natural physical assumptions, see \cite{HE} for details).}

In recent years, it has become clear that this concept is related to the causal orientation of the mean curvature vector field of the submanifold, which provides a better characterization of trapped surfaces and allows their generalization to 2-codimensional spacelike submanifolds of arbitrary dimension $n$ (see \cite{Sen} for more details).  {According to this modern point of view, and following~\cite{MarsSeno}}, we recall that an embedded 2-codimensional spacelike submanifold $S$  {in a causally orientable spacetime} is called a \emph{future} (resp. \emph{past}) \emph{trapped surface} if its mean curvature vector field $\vec{H}$ is timelike and future pointing (resp. past pointing)  {all over $S$}. More recently, the notion of trapped surface has been expanded in order to include the cases where the mean curvature vector is also causal, or even zero, introducing the notions of nearly, weakly, marginally or extremal future (past) trapped surface (see Definition \ref{def:main:2}). {These more recent notions have become relevant in recent years, so for example, marginally trapped surfaces play an important role in the study of the weak cosmic censorship conjeture (see \cite{AMS2008,AM}). Also, cases with arbitrary dimension and codimension have been considered recently obtainig interesting results (see for example \cite{GS}).}

There are many {others} recent works regarding trapped surfaces in a broad way. Such works consider several backgrounds and deal with different aspects of the surfaces: geometrical properties, rigidity, representation, non-existence results, classification in some spacetimes, causality properties, etc (see for instance \cite{AGM,ACC,IB,BS05,Cr,Fl,Mu} and references therein).

\medskip

This article is organized as follows.  {In Section~\ref{sec:orthogospace} we consider the family of orthogonally splitted spacetimes and study their expanding/contracting behaviour. In Section~\ref{sec:submanifolds} the basic preliminaries on spacelike submanifolds of arbitrary codimension immersed in an orthogonally splitted spacetime are presented. In particular, we introduce a distinguished function, obtained from a global temporal one, defined over such submanifolds. Moreover, a expression for its Laplacian its given, which will be essential for obtaining our main results. Section 4 is devoted to the concept of parabolicity which can be understood, at least from the geometric analysis perspective, as an extension of the notion of compactness. In fact, although compactness is a topological property, it entails several geometrical properties in Riemannian manifolds. Such properties are also partially satisfied by parabolic manifolds. We review several known parabolicity criteria and obtain some appropriate geometric conditions that guarantee the parabolicity of a submanifold described as a graph, and in particular of any slice of the spacetime (see Proposition~\ref{prop:main:2} and Corollary~\ref{prop:main:1}).
	
	The main results in the manuscript are contained in Sections~\ref{sec:tecnic} and~\ref{sec:trapped}.} {In Section~\ref{sec:tecnic} we obtain some interesting results regarding the geometry of spacelike submanifolds, of arbitrary dimension and codimension, immersed in an orthogonally splitted spacetime. In particular, we present in Theorems~\ref{thm:rig_general} and~\ref{thm:main:3} two different general rigidity theorems for spacelike parabolic submanifolds under some assumptions, including a boundedness hypothesis. In Theorem~\ref{thm:non_ex_min} we show that even if we impose neither parabolicity nor boundedness we can conclude the non-existence of a strict local minimum of our distinguished function. The results in this section are obtained under some technical assumptions, however such hypothesis are natural in the context of trapped surfaces. For that reason, in the last section of the manuscript we focus our attention on trapped surfaces immersed in realistic 4-dimensional spacetimes, obtaining some nice consequences. In particular, Theorem~\ref{thm:main:1} is derived from Theorem~\ref{thm:rig_general}, and it shows a non-existence result for parabolic (nearly, weakly, marginally) trapped surfaces and a rigidity result for parabolic extremal surfaces. Similarly, as a consequence of Theorem~\ref{thm:non_ex_min} we obtain Theorem~\ref{trappedMinMax}, which provides some information regarding the shape of (non-necessarily parabolic) trapped surfaces. Finally, some nice consequences of these theorems are obtained when restricted to some particular subfamilies of orthogonally splitted spacetimes that represent cosmological models of great interest, such as standard static, twisted and doubly twisted spacetimes, and in particular Generalized Robertson-Walker spacetimes.}

{Our results in Section~\ref{sec:trapped} generalize some previous results in the existing literature. Specifically, when applied to standard static spacetimes, we extend Proposition 2 in~\cite{MarsSeno} and Theorem 4 in~\cite{Pel}, and in the case of Generalized Robertson-Walker spacetimes we get in particular Results 5.1 and 5.2 in~\cite{IB} and most of the rigidity and non-existence results in Section 4.1 in~\cite{Pel2}.}

\section{On orthogonally splitted spacetimes}
\label{sec:orthogospace}

{Bernal and S\'anchez showed in~\cite{BS05} a relevant result on the topological and differentiable structure of any globally hyperbolic spacetime.} Specifically, they proved that any globally hyperbolic spacetime $(\overline{M}^{n+m+1},\overline{g})$, {$n\geq 2$, $m\geq 0$}, is isometric to a product $\mathbb{R}\times \mathcal{S}$ endowed with a Lorentzian metric \[\overline{g}=-\beta d\mathcal{T}^2+\hat{g},\] where $\mathcal{S}$ is a smooth spacelike Cauchy hypersurface of $\overline{M}$, $\mathcal{T}:\mathbb{R}\times\mathcal{S}\rightarrow \mathbb{R}$ the natural projection, $\beta\in\mathcal{C}^{\infty}(\mathbb{R}\times\mathcal{S})$ a positive smooth function and $\hat{g}$ a $(0,2)$ symmetric tensor field on $\mathbb{R}\times\mathcal{S}$ satisfying the following conditions:
\begin{itemize}
	\item[(a)] $\nabla \mathcal{T}$ is a past pointing timelike vector field on $\overline{M}$,
	\item[(b)] for every $\mathcal{T}_0\in\mathbb{R}$ the slice $\mathcal{S}_{\mathcal{T}_0}=\left\{\mathcal{T}_0\right\}\times  {\mathcal{S}}$ is a Cauchy hypersurface and $\hat{g}_{\mathcal{T}_0}:=\hat{g}_{|\mathcal{S}_{\mathcal{T}_0}}$ is a Riemannian metric on $\mathcal{S}_{\mathcal{T}_0}$, and
	\item[(c)] the radical of $\hat{g}$ at each point $q\in \mathbb{R}\times\mathcal{S}$, i.e. the subset of vectors that are orthogonal to the entire vector space, is given by $\mathrm{span}\left\{\nabla\mathcal{T}_q\right\}$.
\end{itemize}
Furthermore, the same authors proved in~\cite{BS06} that any Cauchy hypersurface of a globally hyperbolic spacetime $\overline{M}$ determines an orthogonal splitting as above.

{Motivated by the above splitting result,} and following the notation in~\cite{ARS19}, we will say that a spacetime $(\overline{M}^{n+m+1},\overline{g})$ is \emph{orthogonally splitted} if it is isometric to a product manifold
\begin{equation}\label{eq:1}
	\overline{M}^{n+m+1}=I\times F^{n+m},\quad \quad \overline{g}=-\beta\pi_{\mathbb{R}}^\ast(dt^2)+\pi_F^\ast(g_t),
\end{equation}
where $I\subset\mathbb{R}$ is an open interval, $F^{n+m}$ is an $(n+m)$-dimensional smooth manifold, $\pi_\mathbb{R}$ and $\pi_F$ denote the canonical projections onto $I$ and $F$ respectively, $\beta\in\mathcal{C}^\infty(\overline{M})$ is a positive smooth function and $\{g_t\}_{t\in I}$ is a {smooth} one-parametric family of Riemannian metrics on $F$. For simplicity, we will write the metric $\overline{g}$ as \[\overline{g}=-\beta dt^2+g_t.\]

Let us observe that since $\pi_{\mathbb{R}}=t$ is a temporal function, orthogonally splitted spacetimes are stably causal. However, we have no simple condition to ensure when an orthogonally splitted spacetime is globally hyperbolic. In fact, {even if for all $t\in I$ the completeness of $\left(F,\dfrac{1}{\beta}g_t\right)$ was guaranteed, we could only ensure that $t$ is a \emph{Cauchy temporal function},  {but not necessarily} that the spacetime is globally hyperbolic unless some additional hypothesis are considered (see \cite[Proposition 3.1]{sanchez2021}). Observe, however, that the above {completeness} condition is sufficient to guarantee global hyperbolicity for several families of orthogonally splitted spacetimes, including the standard static ones,} (see \cite[Section 3]{sanchez2021} for details in the topic).

\smallskip

It is natural to consider certain conditions in the expansive or contracting nature of a spacetime, and in particular of an orthogonally splitted spacetime. In this context, an orthogonally splitted spacetime $(\overline{M},\overline{g})$, defined as in~\eqref{eq:1}, is said to be \emph{expanding} (resp. \emph{non-contracting}) throughtout $\partial_t$ at a certain $t_0\in I$ if and only if
\begin{equation}\label{eq:expanding}
	\partial_t \beta_{|t_0} {\leq 0} \quad \hbox{and}\quad (\mathcal{L}_{\partial_t}g_t)_{|t_0}(v,v)>0,\;\;\hbox{(resp. $\geq$)},
\end{equation}
for any $v\in T_xF$, $x\in F$, {where $\mathcal{L}$ stands for the Lie derivative}. The spacetime is said to be \emph{expanding} (resp. \emph{non-contracting}) at an interval $I$ if and only if it is expanding (resp. non-contracting) at any $t\in I$. Analogously, $(\overline{M},\overline{g})$ is said to be \emph{contracting} (resp. \emph{non-expanding}) throughtout $\partial_t$ at $t_0\in I$ if and only if 
\begin{equation}\label{eq:contracting}
	\partial_t \beta_{|t_0} {\geq 0} \quad \hbox{and}\quad (\mathcal{L}_{\partial_t}g_t)_{|t_0}(v,v)<0,\;\;\hbox{(resp. $\leq$)},
\end{equation}
for any $v\in T_xF$, $x\in F$. Similarly, the spacetime is said to be \emph{contracting} (resp. \emph{non-expanding}) at an interval $I$ if and only if it is contracting (resp. non-expanding) at any $t\in I$. {Let us remark that both conditions for being non-contracting (resp. non-expanding) at a certain $t_0\in I$ are equivalent to ask $(\mathcal{L}_{\partial_t}\overline{g})_{|t_0}$ to be positive semi-definite (resp. negative semi-definite).}

Furthermore, the spacetime is said to be \emph{monotonic} if it is either non-contracting or non-expanding and \emph{strictly monotonic} if it is either expanding or contracting. Finally, the spacetime has a \emph{local contracting} (resp. \emph{expanding}) \emph{phase change} at $t_0$ if there exists $\epsilon>0$ such that for all $t_0-\epsilon {\,\,<\,\,} t < t_0$ the spacetime is expanding (resp. contracting), and for all $t_0< t {\,\,<\,\,} t_0+\epsilon$ the spacetime is contracting (resp. expanding) in $t$.

As it is explicitly stated in the previous definitions, the concepts of expansion and contraction depend on the timelike coordinate vector field $\partial_t$ relative to the given splitting. This terminology responds to a physical interpretation. On the one hand, let us observe that if we consider the observer field $U=\frac{1}{\sqrt{\beta}}\partial_t$, the proper time $\bar\tau$ of the observers in $U$ is given by \begin{equation}
	\label{eq:proper_time}
	d\bar\tau=\sqrt{\beta}dt.
\end{equation} 
Therefore, the condition $\partial_t\beta {\leq} 0$ (resp. $\partial_t\beta {\geq} 0$) means that the rate of change of the proper time with respect to the temporal function $t$ is non-increasing (resp. non-decreasing). Moreover, the Lorentzian length $|\partial_t|=-\sqrt{\beta}$ is non-decreasing (resp. non-increasing). On the other hand, the condition $(\mathcal{L}_{\partial_t}g_t)_{|t_0}(v,v)> 0$ for any $v\in T_xF$, $x\in F$, (resp. $(\mathcal{L}_{\partial_t}g_t)_{|t_0}(v,v)< 0)$) implies that an observer in $U$ perceives expansion (resp. contraction) in any direction of its rest space. Let us also observe from~\eqref{eq:proper_time} that $\sqrt{\beta}$ represents the derivative of the proper time with respect to the coordinate $t$.

In this context, we can define the following property of an orthogonally splitted spacetime. 

\begin{definition}\label{def:mod_prop_time}
	{Let $(\overline{M},\overline{g})$ be an orthogonally splitted spacetime as in~\eqref{eq:1}. Given a subset $\mathcal{S}\subseteq\overline{M}$, it is said that $\overline{M}$ has \emph{moderate proper time rate over $\mathcal{S}$} if $\mathrm{inf}_{\mathcal{S}}\,\beta >0$ and $\mathrm{sup}_{\mathcal{S}}\,\beta < \infty$. In the case where $\mathcal{S}=\overline{M}$, we will just say that $\overline{M}$ has \emph{moderate proper time rate}.}
\end{definition}

{The condition of having moderate proper time rate over a subset $\mathcal{S}$} translates into a control of the timelike component in the metric in~\eqref{eq:1} over the subset $\mathcal{S}$, and it is immediately satisfied in the case where $\beta$ is constant, and so for a large family of spacetimes such as twisted or Generalized Robertson-Walker spacetimes.

Let us finish this section by giving a closer look to this last case where $\beta$ is assumed to be constant, {that we can suppose without loss of generality to be $1$}. Our aim is {to show that the sectional curvature of timelike planes and the shape operator of a particular slice $F_{t_0}=\{t_0\}\times F$, $t_0\in\mathbb{R}$ determine the non-contracting/non-expanding behaviour of its chronological future/past. Let us recall at this point that the chronological future (resp. past) of a subset $S\subseteq \overline{M}$ is defined as the set of points in $\overline{M}$ that can be reached from $S$ by future directed (resp. past directed) timelike curves. As usual it will be denoted by $I^+(S)$ (resp. $I^-(S)$).} 

{Since} $\partial_t$ is unitary and normal to each slice $F_t$, the shape operator on such a slice, $A_t$, is defined by the Weingarten equation as ${A_t}(X)=\overline{\nabla}_X\partial_t$, {for all $X\in\mathfrak{X}(F_t)$}, {where $\overline{\nabla}$ denotes the Levi-Civita connection of $\overline{M}$}. { Moreover, $\partial_t$ is {obviously a closed vector field}, so it follows that}

{\[\frac{1}{2}\left( \mathcal{L}_{\partial_t}\overline{g} \right)({X},Y)=\overline{g}\left( \overline{\nabla}_{{X}}\partial_t,Y \right),\] for all ${X},Y\in \mathcal{X}(\overline{M})$}. 

\medskip

With these observations we are in conditions to prove the following result:

\begin{proposition}\label{auxiliarybeta1}
	Let $(\overline{M}^{n+m+1},\overline{g})$ be an orthogonally splitted spacetime with $\beta\equiv 1$. Let $F_{t_0}$ be a spacelike slice whose shape operator is {positive} (resp. negative) semi-definite. Assume also that the sectional curvature {of timelike planes} of the spacetime are non-negative. Then $(I^+(F_{t_0}),\overline{g})$ (resp. $(I^-(F_{t_0}),\overline{g})$) is a non-contracting (resp. non-expanding) orthogonally splitted spacetime.
\end{proposition}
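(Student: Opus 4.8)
The plan is to reduce the statement to a positivity property of the shape operator $A_t$ of the slices and then to propagate that property along the normal geodesic congruence by a Riccati comparison. Since $\beta\equiv 1$, the condition $\partial_t\beta\leq 0$ in the definition of non-contracting holds trivially, and $\mathcal{L}_{\partial_t}(dt^2)=0$ gives $\mathcal{L}_{\partial_t}\overline{g}=\mathcal{L}_{\partial_t}g_t$; so by the identity $\tfrac12(\mathcal{L}_{\partial_t}\overline{g})(X,Y)=\overline{g}(A_t X,Y)$ recalled just before the statement, being non-contracting at a point is \emph{equivalent} to $A_t$ being positive semi-definite there. Hence it suffices to prove that $A_t\geq 0$ at every point of $I^+(F_{t_0})$. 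For the geometry of the congruence, I would note that $\beta\equiv 1$ makes $\partial_t$ a unit timelike geodesic field (the Christoffel symbols $\Gamma^{\lambda}_{tt}$ vanish since $g_{tt}\equiv-1$ and there are no cross terms), so the integral curves $t\mapsto(t,x)$ are unit-speed geodesics orthogonal to the slices. As $t$ is strictly increasing along future timelike curves, one has $I^+(F_{t_0})\subseteq\{t>t_0\}$, and every point $(t_1,x_1)$ with $t_1>t_0$ lies on the normal geodesic issuing from $(t_0,x_1)\in F_{t_0}$; it thus suffices to control $A_t$ along each such segment $[t_0,t_1]$.

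The next step is to derive the Riccati equation for $A_t$ along a normal geodesic. Taking a slice vector field $X$ with $[\partial_t,X]=0$ and using $\overline{\nabla}_{\partial_t}\partial_t=0$, the definition of the curvature tensor yields
\[
(\overline{\nabla}_{\partial_t}A_t)(X)+A_t^2(X)=\overline{R}(\partial_t,X)\partial_t=:R_N(X).
\]
Parallel transport along the geodesic preserves orthogonality to the parallel field $\partial_t$, so choosing a parallel orthonormal frame of the slice distribution turns this into a symmetric matrix ODE $\hat{A}'=\hat{R}-\hat{A}^2$ with $\hat{A}(t_0)\geq 0$ by hypothesis. Finally, for a slice vector $X$ one checks that $\overline{g}(R_N X,X)=|X|^2\,K(\partial_t,X)$, where $K(\partial_t,X)$ is the sectional curvature of the timelike plane $\operatorname{span}(\partial_t,X)$ (the Jacobi operator and the sectional curvature carry the same sign regardless of the chosen conventions); the hypothesis of non-negative timelike sectional curvature therefore gives $\hat{R}\geq 0$ all along the geodesic.

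The heart of the proof is to show that positive semi-definiteness propagates forward under $\hat{A}'=\hat{R}-\hat{A}^2$ with $\hat{R}\geq 0$. I would study the least eigenvalue $\lambda(t)=\min_{|v|=1}\langle\hat{A}(t)v,v\rangle$, which is Lipschitz and whose lower Dini derivative, evaluated on a unit eigenvector $v$ realizing the minimum, satisfies $D_+\lambda(t)\geq\langle\hat{A}'(t)v,v\rangle=\langle\hat{R}(t)v,v\rangle-\lambda(t)^2\geq-\lambda(t)^2$. Since $\lambda(t_0)\geq 0$, comparison with the solution $\mu\equiv 0$ of $\mu'=-\mu^2$ forces $\lambda(t)\geq 0$ for all $t\geq t_0$, i.e. $A_t\geq 0$ throughout $I^+(F_{t_0})$, which is the non-contracting conclusion. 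The statement for $I^-(F_{t_0})$ then follows by time reversal: under $s=-t$ the field $-\partial_t$ has shape operator $-A_t$ and the \emph{same} curvature operator $R_N$, so the hypothesis $A_{t_0}\leq 0$ becomes a positive semi-definite datum for $-A$ and the identical argument gives $A_t\leq 0$ for $t<t_0$, i.e. non-expanding.

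The step I expect to be the main obstacle is this last propagation argument. The naive approach, looking at the first time a fixed direction makes $\overline{g}(A_t v,v)$ vanish, only yields that the corresponding derivative is zero and does not exclude the form subsequently becoming negative, precisely because the critical direction rotates in $t$. The robust remedy is the eigenvalue differential inequality $D_+\lambda\geq-\lambda^2$ together with the ODE comparison principle, which correctly handles the degenerate boundary case in which eigenvalues of $\hat{A}$ touch zero; getting the one-sided derivative estimate for $\lambda$ and invoking comparison with $\mu\equiv 0$ is where the genuine work lies.
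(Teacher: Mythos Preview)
Your argument is correct, but the paper's proof avoids precisely the obstacle you flag as the heart of the matter. The key difference is the choice of vector field along the normal geodesic: you work in a \emph{parallel} orthonormal frame, which leads to the matrix Riccati equation $\hat{A}'=\hat{R}-\hat{A}^2$ and forces you into the Dini-derivative/ODE-comparison argument for the least eigenvalue. The paper instead takes $X$ to \emph{commute} with $\partial_t$ (i.e.\ $[\partial_t,X]=0$, so $\overline{\nabla}_{\partial_t}X=A_tX$ rather than $0$), and computes directly
\[
\tfrac{1}{2}\,\gamma'\bigl((\mathcal{L}_{\partial_t}\overline{g})(X,X)\bigr)
= -\,\overline{g}(\overline{R}(\partial_t,X)\partial_t,X) + \overline{g}(\overline{\nabla}_X\partial_t,\overline{\nabla}_X\partial_t).
\]
With this choice the $|A_tX|^2$ term enters with a \emph{plus} sign instead of a minus sign, so both summands on the right are non-negative under the curvature hypothesis. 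Thus $t\mapsto(\mathcal{L}_{\partial_t}\overline{g})(X,X)$ is monotone non-decreasing along each normal geodesic, and since it is non-negative at $t_0$ by the shape-operator assumption, it stays non-negative for all $t>t_0$. No eigenvalue tracking, no comparison principle, no worry about rotating critical directions.

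What each approach buys: yours is the standard Riccati machinery and would generalize to situations where one only controls $\hat{R}$ from below by a (possibly negative) multiple of the identity; the paper's trick is shorter and entirely elementary here, exploiting that with commuting $X$ one is really differentiating the quadratic form $\overline{g}(A_tX,X)$ while $X$ itself stretches with the flow, which absorbs the $-A_t^2$ term.
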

\begin{proof}
	Let us consider the case where the shape operator over $F_{t_0}$ is {positive} semi-definite, as the other case will be analogous (with an appropriate time orientation change). For this, let us begin by taking $\gamma$ the {future directed} integral curve of $\partial_t$ starting at a point $\gamma(0)=p\in F_{t_0}$, and consider $X$ {any spacelike} vector field around $p$ which commutes with $\partial_t$. From the decomposition of an orthogonally splitted spacetime and the fact that $\beta\equiv 1$, we have that $\gamma$ is a future directed timelike geodesic, hence a standard computation leads us to:
	\begin{equation}
		\label{eq:32}
		\begin{split}
			-\overline{g}(\overline{R}(\partial_{t},X)\partial_t,X) &= \overline{g}(\overline{\nabla}_{\partial_t}\overline{\nabla}_X\partial_t,X) \\&=\dfrac{1}{2}\gamma' \left( \left( \mathcal{L}_{\partial_t}\overline{g} \right)(X,X) \right)-\overline{g}(\overline{\nabla}_X\partial_t,\overline{\nabla}_X\partial_t),
		\end{split}
	\end{equation}
	{where $\overline{R}(X,Y)Z=\overline{\nabla}_{[X,Y]}Z-[\overline{\nabla}_X,\overline{\nabla}_Y]Z$ stands for the Riemann curvature tensor of $\overline{M}$.}
	
	Therefore, recalling that $\overline{\nabla}_X\partial_t$ is spacelike and the hypothesis for sectional curvatures, we get that $\gamma' \left( \left( \mathcal{L}_{\partial_t}\overline{g} \right)(X,X)\right)\geq 0$. {In the particular case where $X\in\mathfrak{X}(F_{t_0})$ it holds} $\left( \mathcal{L}_{\partial_t}\overline{g} \right)(X,X)=2\overline{g}({A_{t_0}}(X),X)\geq 0$, {so it is} also non-negative along the curve $\gamma$. Finally, as any point in $I^+(F_{t_0})$ can be reached by a curve $\gamma$ as before {and, trivially, $(\mathcal{L}_{\partial_t}\overline{g})(\partial_t,\partial_t)$=0}, it follows that $\left( \mathcal{L}_{\partial_t}\overline{g} \right)(V,V)\geq 0$ for all points in $I^+(F_{t_0})$ and for all $V\in \mathfrak{X}(I^+(F_{t_0}))$, as desired. 
\end{proof}

\section{On submanifolds in orthogonally splitted spacetimes}
\label{sec:submanifolds}
Let us begin by recalling some general facts of submanifolds in Lorentzian Geometry. For this, consider $(\overline{M}^{n+m+1},\overline{g})$ a (general) spacetime, and let $x:\Sigma^n\rightarrow \overline{M}^{n+m+1}$, $n\geq 2$, {$m\geq 0$}, be a connected spacelike isometric immersion. Let us recall at this point that an isometric immersion is said to be \emph{spacelike} if the induced metric from $({\overline{M}},\overline{g})$, denoted by $g$, is a Riemannian one. It is worth pointing out that, from a physical point of view, the concept of submanifold usually corresponds to the mathematical concept of an embedded submanifold. However, our techniques will allow us to {present} our results in the more general framework of immersed submanifolds.

Let us denote by $\nablat$ and $\nabla$ the Levi-Civita connections of $\overline{M}$ and $\Sigma$, respectively. As it is well known, both Levi-Civita connections are related by the Gauss formula given by
\begin{equation}\label{eq:Gaussf}
	\nablat_XY=\nabla_XY-II(X,Y),
\end{equation}
for any $X,Y\in\mathfrak{X}(\Sigma)$, where $II:\mathfrak{X}(\Sigma)\times\mathfrak{X}(\Sigma)\rightarrow \mathfrak{X}^\perp (\Sigma)$ stands for the second fundamental form of $\Sigma$, given by
\begin{equation}\label{eq:2ff}
	II(X,Y)=-(\nablat_XY)^\perp,
\end{equation}	
$Z^\perp$ being the normal component of the vector field $Z\in\mathfrak{X}(\overline{M})$ along $\Sigma$. Observe that our convention in this paper for the definition of the second fundamental form is the classical in General Relativity, and opposite to the usual one in Differential Geometry. Also, as it is usual in the context of General Relativity, we define the mean curvature vector field of $\Sigma$ as the contraction of {its second fundamental form}, i.e.
\begin{equation}\label{eq:H}
	\overrightarrow{H}=\sum_{i=1}^n II(E_i,E_i),
\end{equation}
$\{E_1,...,E_n\}$ being a local orthonormal frame on $\Sigma$.

Furthermore, consider $S^{n+m}$ an immersed spacelike hypersurface of $(\overline{M}^{n+m+1},\overline{g})$ {and suppose that {$\Sigma^n$} is a Riemannian submanifold of $S$}. Let $\overrightarrow{h}$ denote the mean curvature vector of $\Sigma$ in $S$ and $A$ the shape operator of $S$ with respect to a unitary normal vector field $N$. Then, from the Gauss formula it is not difficult to see that
\begin{equation}\label{eq:H_desc}
	\overrightarrow{H}=\overrightarrow{h}+(\mathrm{tr}_{|\Sigma}A)N,
\end{equation}
$\mathrm{tr}_{|\Sigma}$ being the trace of the shape operator $A$ restricted to $\mathfrak{X}(\Sigma)$.

\medskip

From now on, let us assume that our spacetime $(\overline{M},\overline{g})$ is an orthogonally splitted spacetime as in~\eqref{eq:1}. For a given submanifold $\Sigma$ as before, it is possible to consider a distinguished function defined over $\Sigma$. Specifically, let us consider the function $\tau:=\pi_{\mathbb{R}{|\Sigma}}:\Sigma \rightarrow \mathbb{R}$. Observe that $\tau=t_0$ for a certain constant $t_0$ if and only if $\Sigma$ is contained in the spacelike slice $F_{t_0}$. Furthermore,  {the function $\tau$ allows us to define the following boundedness notions.} 

\begin{definition}\label{def:bounded}
	{Let $(\overline{M},\overline{g})$ be an orthogonally splitted spacetime as in~\eqref{eq:1}. Then, an immersed spacelike submanifold $\Sigma^n$ in $\overline{M}$ is said to be \emph{bounded away from the future infinity (resp. past infinity)} if $\tau$ is bounded from above (resp. from below). Finally, $\Sigma^n$ is said to be \emph{bounded {away} from the infinity} if it is both bounded {away} from the future and from the past infinity.}
\end{definition}

Our next goal is to compute a nice expression for the Laplacian of $\tau$ which will be essential for the main results of the paper. To that end, in what follows let $\left\{E_1,...,E_n\right\}$ and $\left\{N_1,...,N_{m+1}\right\}$ be local orthonormal frames of $\mathfrak{X}(\Sigma)$ and $\mathfrak{X}^\perp(\Sigma)$ respectively, and let us consider $\epsilon_j=g(N_j,N_j)$, $1\leq j\leq m+1$. It is quite straightforward to see that the gradient of $\tau$ in $\Sigma$, $\nabla \tau$, can be expressed as
\begin{equation}
	\label{eq:4}
	\nabla \tau =-\frac{1}{\beta} \partial_t^{T} = -\frac{1}{\beta}\left( \partial_t-\sum_{j=1}^{m+1} \epsilon_j \overline{g}(\partial_t,N_j)N_j \right),
\end{equation}
where $\partial_t^T$ denotes the tangential component of $\partial_t$ along $\Sigma$. With the previous expression, we can compute the Laplacian of $\tau$ in $\Sigma$, $\Delta \tau$, in the following way,
\begin{equation}
	\label{eq:5}
	\begin{aligned}
		\Delta \tau = \sum_{i=1}^n g\left(\nabla_{E_i}\nabla \tau, E_i\right) = \sum_{i=1}^n g\left(\nabla_{E_i}\left( -\frac{1}{\beta}\partial_t^T \right), E_i\right)\\
		= \sum_{i=1}^n \left( -E_i\left(\frac{1}{\beta}\right)g\left(\partial_t^{T}, E_i\right) -\frac{1}{\beta}g\left(\nabla_{E_i}\partial_t^{T},E_i\right) \right).
	\end{aligned}
\end{equation}
Let us recall that, on the one hand,
\begin{equation}
	\label{eq:6}
	\sum_{i=1}^n E_i\left(\frac{1}{\beta}\right)g\left(\partial^{T}_t,E_i\right) = -\frac{1}{\beta}g\left(\partial_t^T, \nabla \ln \beta\right)= g\left(\nabla \tau, \nabla \ln \beta\right).
\end{equation}
On the other hand,  
\begin{equation}
	\label{eq:7}
	g\left(\nabla_{E_i}\partial_t^T,E_i\right)= \overline{g}\left(\overline\nabla_{E_i}\partial_t,E_i\right) - \sum_{j=1}^{m+1} \epsilon_j  \overline{g}(\partial_t, N_j) \overline{g}(\overline{\nabla}_{E_i}N_j, E_i).
\end{equation}
Moreover,
\begin{equation}
	\label{eq:8}
	\overline{\mathrm{div}}(\partial_{t}) = \sum_{i=1}^{n} \overline{g}(\overline\nabla_{E_i}\partial_t,E_i) + \sum_{j=1}^{m+1} \epsilon_j \overline{g}(\overline\nabla_{N_j}\partial_t,N_{j}),
\end{equation}
where $\overline{\mathrm{div}}$ stands for the divergence operator on $\overline{M}$. There are some recognizable terms there. From~\eqref{eq:2ff} and~\eqref{eq:H}, it follows that
\begin{equation}
	\label{eq:9}
	\overline{g}(\partial_t,\overrightarrow{H}) =   \sum_{j=1}^{m+1}\epsilon_j \overline{g}\left(\partial_t,N_j\right)\sum_{i=1}^n\overline{g}\left(\overline\nabla_{E_i}N_j, E_i\right).
\end{equation}
For the rest of the terms, we should make use of the following decomposition of the normal vector fields, $N_j=-\frac{1}{\beta} g(\partial_t,N_j)\partial_t+N^F_j$, where $N^{F}_j\in \mathfrak{X}(F)$ and it satisfies that $[\partial_t, N^F_{j}]=0$. Then,
\begin{equation}
	\label{eq:11}
	\begin{aligned}
		\overline{g}(\overline\nabla_{N_j}\partial_t,N_j)= \frac{1}{\beta^2} \overline{g}(\partial_t,N_j)^2 \overline{g}(\overline\nabla_{\partial_t}\partial_t,\partial_t) + \overline{g}(\overline\nabla_{N^F_j}\partial_t, N^F_j)\qquad\\ -\frac{1}{\beta}\overline{g}(\partial_t,N_j)\left( \overline{g}(\overline\nabla_{N^F_j}\partial_{t}, \partial_t)- \overline{g}(\overline\nabla_{\partial_t} N^F_j, \partial_{t}) \right)\quad\\
		= -\frac{1}{2} \overline{g}\left(\frac{\partial_t}{\sqrt{\beta}},N_j\right)^2 \partial_t(\ln \beta) +\frac{1}{2} (\mathcal{L}_{\partial_t} \overline{g})(N_j^{F}, N^F_j).
	\end{aligned}
\end{equation}

Joining {the above expressions} with (\ref{eq:5}) we get

\begin{equation}
	\label{eq:13}
	\begin{aligned}
		\Delta \tau =& -g\left(\nabla \tau, \nabla \left(\ln \beta\right)\right) -\frac{1}{\beta}\left(\overline{\mathrm{div}}(\partial_t) - \overline{g}(\partial_t,\overrightarrow{H})\right)\\ &+\dfrac{1}{\beta}\left(\sum_{j=1}^{m+1}\epsilon_j \frac{1}{2}\left( (\mathcal{L}_{\partial_t}g_t)(N_j^F,N_j^F) - \overline{g}\left(\frac{\partial_t}{\sqrt{\beta}},N_j\right)^2\partial_t(\ln \beta) \right)  \right).
	\end{aligned}
\end{equation}

Finally, and in order to give a more compact expression, let us make the following two observations. On the one hand, let us observe that in~\eqref{eq:8} we have computed $\overline{\textrm{div}}(\partial_t)$ considering the local orthonormal frame of $\mathfrak{X}(\overline{M})$ given by $\mathcal{B}=\left\{E_1,...,E_n,N_1,...,N_{m+1}\right\}$. However, it can also be useful to consider a different orthonormal frame of $\mathfrak{X}(\overline{M})$ related to the splitting in (\ref{eq:1}). In this sense, let us define $\mathcal{B}'=\left\{\partial_t/\sqrt{\beta}, \tilde{E}_1,...,  \tilde{E}_{m+n}\right\}$, where given any point $p=(t,q)\in \overline{M}$, $\{\tilde{E}_{1\,|p},...,\tilde{E}_{m+n\,|p}\}$ is the lifting to $T_p\overline{M}$ of an orthonormal basis of $T_qF$ with respect to the Riemannian metric $g_t$. Thus, computing $\overline{\textrm{div}}(\partial_t)$ with respect to $\mathcal{B}'$, we get
\begin{equation}
	\label{eq:12}
	\overline{\mathrm{div}}(\partial_t) = \frac{1}{2}\left(\partial_t (\ln \beta) + \sum_{i=1}^{n+m} (\mathcal{L}_{\partial_t}g_t)\left( \tilde{E}_i, \tilde{E}_i \right)  \right).
\end{equation}

On the other hand, let us define the map $\overline{\xi}:{\mathfrak{X}(\overline{M})}\times {\mathfrak{X}(\overline{M})} \rightarrow \mathbb{R}$ given by
\begin{equation}
	\label{eq:14}
	\overline{\xi}(V,W) = (\mathcal{L}_{\partial_t}g_t)(d\pi_F(V),d\pi_F(W)).
\end{equation}
It follows easily that the trace of $\overline{\xi}$ can be computed by using any of the bases we have considered, obtaining that
\begin{equation}
	\label{eq:15}
	\begin{aligned}
		\mathrm{tr}(\overline{\xi}) =& \sum_{i=1}^n (\mathcal{L}_{\partial_t}g_t)(d\pi_F(E_i),d\pi_F(E_i)) + \sum_{j=1}^{m+1}\epsilon_j(\mathcal{L}_{\partial_t}g_t)(d\pi_F(N_j),d\pi_F(N_{j}))\\
		=&\;  \mathrm{tr}(\xi) + \sum_{j=1}^{m+1}\epsilon_j(\mathcal{L}_{\partial_t}g_t)(N^{F}_j,N^{F}_{j}),
	\end{aligned}
\end{equation}
where $\xi=\overline{\xi}_{|{\mathfrak{X}(\Sigma)}\times {\mathfrak{X}(\Sigma)}}$, and
\begin{equation}\label{eq:divB'}
	\mathrm{tr}(\overline{\xi})=\sum_{i=1}^{n+m} (\mathcal{L}_{\partial_t}g_t)\left( \tilde{E}_i, \tilde{E}_i \right),
\end{equation}
so in particular
\begin{equation}
	\label{eq:16}
	\overline{\mathrm{div}}(\partial_t) = \frac{1}{2}\left( \partial_t(\ln \beta) + \mathrm{tr}(\overline{\xi}) \right).
\end{equation}

In conclusion, taking into account (\ref{eq:15}) and (\ref{eq:16}), (\ref{eq:13}) becomes

\begin{equation}
	\label{eq:17}
	\begin{aligned}
		\Delta \tau = & -g(\nabla \tau, \nabla (\ln \beta)) -\frac{1}{\beta}\left( -\overline{g}(\partial_t, \overrightarrow{H})+\frac{1}{2}\mathrm{tr}(\xi) + \frac{1}{2}\partial_t(\ln \beta)\left( 1 + \sum_{j=1}^{m+1}\epsilon_j\overline{g}\left(\frac{\partial_t}{\sqrt{\beta}},N_j\right)^2 \right)\right). 
	\end{aligned}
\end{equation}
Moreover, if we recall that $\overline{g}(\partial_t/\sqrt{\beta}, \partial_t/\sqrt{\beta})=-1$, then

\[
-1=\sum_{i=1}^{n}\overline{g}\left(\frac{\partial_t}{\sqrt{\beta}},E_i  \right)^2 + \sum_{j=1}^{m+1}\epsilon_j\overline{g}\left(\frac{\partial_t}{\sqrt{\beta}},N_j  \right)^2=\sinh^2(\theta)-\cosh^2(\theta),
\] for certain $\theta$. Hence, the last term in the parenthesis is easily identifiable with $-\sinh^2(\theta)$, and therefore,
\begin{equation}
	\label{eq:18}
	\Delta \tau = -g(\nabla\tau, \nabla(\ln \beta)) -\frac{1}{\beta}\left(-\overline{g}(\partial_t, \overrightarrow{H}) + \frac{1}{2} \left( \mathrm{tr}(\xi) - \partial_t(\ln \beta)\sinh^2(\theta) \right)  \right).
\end{equation}

As a final step, let us consider a conformal change on the metric $g$, so the first term on the previous expression can be absorved into the Laplacian. Concretely, and taking into account that under the conformal change $\tilde{g}=e^{2\varphi}g$ the Laplacian transforms as
\begin{equation}
	\label{eq:19}
	\tilde{\Delta} f = e^{-2\varphi}\left( \Delta f + (n-2)g(\nabla f, \nabla \varphi) \right),
\end{equation}
$\tilde{\Delta}$ being the Laplacian with respect to $\tilde{g}$, if we assume that $n>2$ and we consider $\varphi = \frac{1}{n-2}\ln \beta$, we finally get
\begin{equation}
	\label{eq:20}
	\tilde{\Delta} \tau=-\dfrac{1}{2} \beta^{\frac{n}{n-2}} \left(\mathrm{tr}(\xi) - \partial_t(\ln \beta)\sinh^2(\theta)-2\overline{g}(\partial_t,\overrightarrow{H}) \right).
\end{equation}
As we have mentioned before, the previous expression for the Laplacian will be essential in order to obtain the main results of this paper. Observe that, from the computations, \eqref{eq:20} is only valid under the assumption $n>2$. However, we will see that such a restriction can be removed from our main results.

\section{Parabolicity of spacelike submanifolds}
\label{sec:parabolicity}

Let us recall that a smooth function $f\in\mathcal{C}^\infty(M)$ over a Riemannian manifold $(M,g)$ is said to be superharmonic (resp. subharmonic) if its Laplacian is non-positive (resp. non-negative). Then, a complete (non-compact) $n$-dimensional Riemannian manifold is called \emph{parabolic} if it does not admit any non-constant positive superharmonic function on it (see, for instance,~\cite{Kaz}).

The study of parabolicity can be approached from different points of view. For instance, parabolicity of Riemannian surfaces is closely related to their Gaussian curvature. On the one hand, a key result by Ahlfors~\cite{Ahl} and Blanc-Fiala-Huber~\cite{Hub} asserts that a complete (non-compact) Riemannian surface with non-negative Gaussian curvature is parabolic. Moreover, let us recall that the Bonnet-Myers' theorem (see \cite{Petersen}) ensures that any Riemannian surface with Gaussian curvature bounded from below by a positive constant has to be compact. Hence, the condition of non-negativity for the Gaussian curvature becomes naturally an extension (from the geometric perspective) of the compact case.

{In a more general setting}, it is also well-known that any complete Riemannian surface with \emph{finite total curvature} is also parabolic (see~\cite{Lee}). Let us recall that a complete Riemannian surface  {$M$} is said  to have finite total curvature if the negative part of its Gaussian curvature is integrable. More precisely, if $K$ denotes the Gaussian curvature on {$M$}, then {$M$} has finite total curvature if 
\begin{equation}\label{Parabolic}
	\int_{{M}} \max (0,- K)\, dA_{{M}}<\infty,
\end{equation}
where $dA_{{M}}$ is the area element of {$M$}. Observe that the integral is well-defined with a compact exhaustion procedure. As a direct consequence, any complete Riemannian surface with non-negative Gaussian curvature out of a compact subset is parabolic.

\smallskip

In spite of the $2$-dimensional case, in higher dimension parabolicity of Riemannian manifolds has no clear relation with the sectional curvature. In fact, the Euclidean space $\mathbb{R}^n$ is parabolic if and only if $n \leq 2$. However, there are criteria assuring the parabolicity of a Riemannian manifold of arbitrary dimension based on the volume growth of its geodesic balls (see \cite{AMR} and references therein).

Nevertheless, for this paper we will make use of a different approach. As it is obvious from its definition, and we have mentioned in the Introduction, parabolicity can be interpreted as a kind of extension of the notion of compactness. In fact, some properties {of the geometric analysis on}  compact Riemannian manifolds are also present in parabolic ones. However, and unlike the compact case, the image by a differentiable map of a parabolic manifold is no longer parabolic. In order to preserve parabolicity, we need to restrict ourselves to the so-called quasi-isometries. Let us recall that given two Riemannian manifolds $(M_1, g_1)$ and $(M_2, g_2)$, a global diffeomorphism $\varphi:M_1\rightarrow M_2$ is called a quasi-isometry if there exists a constant $c \geq 1$ such that
\begin{equation}\label{eq:q-i}
	c^{-1} \ g_1(v, v) \leq g_2(d\varphi(v), d\varphi(v)) \leq c \ g_1(v, v),
\end{equation}
for all $v \in T_pM_1$, $p \in M_1$. It is known that parabolicity is invariant under quasi-isometries \cite{Ka}, \cite[Cor. 5.3]{Gr}.

\smallskip

{The concept of quasi-isometry will help us} to determine whether a submanifold is parabolic by making an appropriate comparison with a parabolic manifold. In this sense, and moving now to the context of Lorentzian geometry, let us consider $(M_1,\overline{g}_1)$ and $(M_2,\overline{g}_2)$ two Lorentzian manifolds. We will say that $(M_{1},\overline{g}_1)$ is \emph{causally related} to $(M_2,\overline{g}_2)$, and we will denote it by $M_1\prec M_2$, if there exists a global diffeomorphism $\phi:M_1\rightarrow M_2$ such that {for any $p\in M_1$ and} any future directed timelike vector $v\in T_pM_1$, {$d\phi_p(v)$} is also a future directed timelike vector in $T_{\phi(p)}M_2$. {From a physical point of view, $(M_1,\overline{g}_1)$ is causally related to $(M_2,\overline{g}_2)$ if the timelike cones for $(M_1,\overline{g}_1)$ are narrower than those for $(M_2,\overline{g}_2)$. Finally,} two Lorentzian manifolds $M_1$ and $M_2$ are called \emph{isocausal} if $M_1\prec M_2$ and $M_2\prec M_1$ (see~\cite{GP-Sen}).

Inspired by \cite{FHS1}, let us deal with the particular case of orthogonally splitted spacetimes $(\overline{M}=I\times F\,,\,\overline{g})$ as in~\eqref{eq:1}, which are causally related (in both directions) to Generalized Robertson-Walker spacetimes. Concretely, we will assume that there exists $g_0$ a Riemannian metric defined on $F$, and two positive functions $\alpha_i:\mathbb{R}\rightarrow \mathbb{R}^+$, $i=1,2$, such that for any point $p=(t,x)\in {\overline{M}}$ and $v\in T_xF$ it holds

\begin{equation}
	\label{eq:21}
	\beta(p)\alpha_2(t)g_0(v,v) \leq  g_t(v,v) \leq \beta(p)\alpha_1(t)g_0(v,v).
\end{equation}
{In that case, }it is not difficult to show (see for instance \cite[Section 4.2]{FHS1}) that $({\overline{M}},\overline{g}_1)\prec ({\overline{M}},\overline{g})\prec ({\overline{M}},\overline{g}_2)$, where
\begin{equation}
	\label{eq:22}
	\overline{g}_i=-dt^2 + \alpha_i(t) g_0,\quad 1\leq i\leq 2.
\end{equation}
Moreover, it follows that both $(\overline{M},\overline{g})$ and $(\overline{M},\overline{g}_1)$ {(or $(\overline{M},\overline{g}_2)$)} are isocausal if both metrics $\overline{g}_i$ are conformal, which follows if (see \cite[Proposition 4.6]{FHS1})
\begin{equation}
	\label{eq:23}
	\int\limits_0^{\pm\infty}\frac{1}{\sqrt{\alpha_1(t)}}dt = \int\limits_0^{\pm\infty}\frac{1}{\sqrt{\alpha_2(t)}}dt.
\end{equation}

\smallskip

Our aim now is to obtain some appropriate geometric conditions which guarantee the parabolicity of a submanifold $\Sigma$ of arbitrary codimension in an orthogonally splitted spacetime. In order to do this, we will restrict ourselves to the case of submanifolds obtained as a graph of a suitable function. Specifically, let us assume that $F$ can be written as a product manifold $F^{n+m}=F_1^{{n}}\times F_2^{{m}}$. Then, given any smooth function $u:F_1\rightarrow \mathbb{R}$ and any (fixed) point $x_2\in F_2$, we can define the submanifold $\Sigma_{u,x_2}^{{n}}$ as the graph
\begin{equation}
	\label{eq:24}
	\Sigma_{u,x_2} := \left\{ (u(x_1),x_1,x_2)\in {\overline{M}}: x_1\in F_1 \right\}.
\end{equation}

We are now able to present the following result.

\begin{proposition}
	\label{prop:main:2}
	Consider $({\overline{M}^{n+m+1}}=I\times F^{n+m},\overline{g})$ an orthogonally splitted spacetime as in \eqref{eq:1} with $F^{n+m}=F_1^n\times F_2^m$, $u:F_1\rightarrow \mathbb{R}$ a smooth function, and assume the following conditions:
	\begin{enumerate}[label=(\roman*)]
		\item \label{prop:main:2-1} there exist $g_0$ and $\alpha_i$, $1\leq i\leq 2$, satisfying~\eqref{eq:21},
		\item \label{prop:main:2-2} $(F_1,g_{0\,|F_1})$ is a parabolic Riemannian manifold,
		\item \label{prop:main:2-4}  $0<\inf\beta_u\alpha_2(u)$ and $\sup\beta_u\alpha_1(u)<\infty$, where $\beta_u=\beta(u(x_{{1}}),x_1,x_2)$ and $\alpha_i(u)=\alpha_i(u(x_{{1}}))$; and
		\item \label{prop:main:2-3} $\left| \nabla^0 u \right|_0\leq {k\,} \alpha_2(u)$ for some constant $0<{k}<1$, where $\nabla^0$ and $|\cdot|_0$ stand for the gradient operator and the norm in $(F_{{1}},g_{0\,|F_{{1}}})$.
	\end{enumerate}
	Then, for each $x_{{2}}\in F_{{2}}$ the submanifold $(\Sigma_{{u,x_2}},g)$ is parabolic.
\end{proposition}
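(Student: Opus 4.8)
The plan is to realise $(\Sigma_{u,x_2},g)$ as a manifold \emph{quasi-isometric} to the parabolic manifold $(F_1,g_{0|F_1})$, and then to invoke the quasi-isometry invariance of parabolicity recorded after~\eqref{eq:q-i}. The natural candidate for the quasi-isometry is the graph parametrization $\Phi:F_1\to\Sigma_{u,x_2}$, $\Phi(x_1)=(u(x_1),x_1,x_2)$, which is a global diffeomorphism onto $\Sigma_{u,x_2}$, its inverse being the restriction of the projection $\pi_{F_1}$. It then remains only to verify the two-sided comparison in~\eqref{eq:q-i} for $\Phi$ between $g_{0|F_1}$ and the metric $g$ induced by the immersion.

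First I would compute the induced metric. For $v\in T_{x_1}F_1$ one has $d\Phi(v)=du(v)\,\partial_t+v$, where $v$ is read as its lift to $TF$ with vanishing $F_2$-component. Since the metric in~\eqref{eq:1} is block-diagonal with $\overline{g}(\partial_t,\partial_t)=-\beta$ and $\partial_t\perp TF$, this yields $g(d\Phi(v),d\Phi(v))=g_u(v,v)-\beta_u\,(du(v))^2$, all quantities being evaluated at $t=u(x_1)$, with $g_u$ the restriction of $g_t$ to $F_1$-vectors. Discarding the nonpositive timelike term and applying the right-hand inequality of~\eqref{eq:21} together with hypothesis~\ref{prop:main:2-4} gives at once the upper bound $g(d\Phi(v),d\Phi(v))\le \beta_u\alpha_1(u)\,g_0(v,v)\le C_1\,g_0(v,v)$, with $C_1:=\sup\beta_u\alpha_1(u)<\infty$.

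The hard part will be the lower bound, since a priori the negative contribution $-\beta_u(du(v))^2$ could swamp $g_u(v,v)$ and even fail to leave $g$ positive definite; this is exactly what the gradient hypothesis~\ref{prop:main:2-3} is designed to prevent. By the Cauchy--Schwarz inequality in $(F_1,g_{0|F_1})$ one has $(du(v))^2\le|\nabla^0 u|_0^2\,g_0(v,v)$, and feeding in~\ref{prop:main:2-3} together with the left-hand inequality of~\eqref{eq:21} bounds the timelike term by a fixed fraction of $g_u$, namely $\beta_u(du(v))^2\le k\,g_u(v,v)$ with $k<1$. Hence $g(d\Phi(v),d\Phi(v))\ge(1-k)\,g_u(v,v)\ge(1-k)\inf(\beta_u\alpha_2(u))\,g_0(v,v)=:C_2\,g_0(v,v)$, which is strictly positive by~\ref{prop:main:2-4}. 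The two estimates together establish~\eqref{eq:q-i} with $c=\max\{C_1,C_2^{-1}\}$, so $\Phi$ is a quasi-isometry; in particular $(\Sigma_{u,x_2},g)$ is complete, completeness being preserved under such a bi-Lipschitz comparison. Finally, since $(F_1,g_{0|F_1})$ is parabolic by~\ref{prop:main:2-2}, the quasi-isometry invariance of parabolicity transfers this to $(\Sigma_{u,x_2},g)$, as desired.
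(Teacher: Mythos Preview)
Your proposal is correct and follows essentially the same strategy as the paper: identify $\Sigma_{u,x_2}$ with $F_1$ via the graph map, compute the pulled-back metric $g_{F_1}=g_u-\beta_u\,du^2$, and sandwich it between constant multiples of $g_0$ using~\eqref{eq:21} for the upper bound and the combination of Cauchy--Schwarz,~\eqref{eq:21} and hypothesis~\ref{prop:main:2-3} for the lower bound, concluding by quasi-isometry invariance of parabolicity. The only cosmetic difference is that you bound the timelike term $\beta_u(du(v))^2$ against $g_u$ first and then descend to $g_0$, whereas the paper estimates both $g_u(v,v)$ and $\beta_u(du(v))^2$ directly in terms of $g_0$ before combining.
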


\begin{proof}
	Given $x_2\in F_2$, our aim is to show that $(\Sigma_{u,x_2},g)$ and $(F_1,g_{0\,|F_1})$ are quasi-isometric. In order to do so, let us recall that $(\Sigma_{u,x_2},g)$ is isometric to $(F_1,g_{F_1})$, where
	\begin{equation}
		\label{eq:25}
		g_{F_1}(v,w)= g_{u(x_1)}(v,w) -{\beta_u} du(v)du(w),
	\end{equation}
	for any $v,w\in T_{x_1}F_1$. Then, it follows from the second inequality of~\eqref{eq:21} that, for any $v\in T_{x_1}F_1$,
	\begin{equation}
		\label{eq:26}
		g_{F_{1}}(v,v) \leq g_{u(x_1)}(v,v)\leq \beta_{u}\alpha_1(u) g_0(v,v).
	\end{equation}
	Furthermore, from the first inequality of~\eqref{eq:21} and assumption~\ref{prop:main:2-3} it holds
	\begin{equation}
		\label{eq:27}
		\begin{aligned}
			g_{F_{1}}(v,v) = &   g_{u(x_1)}(v,v) - \beta_{u}du(v)^2\\
			\geq& \beta_{u} \alpha_2(u) g_0(v,v) - \beta_{u} du(v)^2\\  \geq  & \beta_{u}\alpha_2(u)g_0(v,v)\left( 1-{k} \right).
		\end{aligned}
	\end{equation} Hence, joining together these last two inequalities,
	\begin{equation}
		\label{eq:31}
		(1-{k})\beta_u\alpha_2(u)g_0(v,v)\leq g_{F_1}(v,v)\leq \beta_u \alpha_1(u)g_0(v,v).
	\end{equation}
	In conclusion, taking into account~\eqref{eq:31} and assumption~\ref{prop:main:2-4}, both $g_{F_{{1}}}$ and $g_0$ are quasi-isometric. Finally, since $(F_1,g_0|_{F_{1}})$ is parabolic by assumption~\ref{prop:main:2-2}, $(F_1,g_{F_1})$ is also parabolic, and so {it} is $(\Sigma_{u,x_2},g)$.
\end{proof}

\begin{remark}
	\label{rem:main:1}
	Let us observe that, in the proof of Proposition~\ref{prop:main:2}, we really do not need that (\Ref{eq:21}) holds for all vectors in $T_{{p}}\overline{M}$, but only for those in $T_{{x_1}} F_1$. This will be specially relevant in Section \ref{sec:aplicaCos} when considering twisted products.
\end{remark}

The conditions imposed in the previous result  apply in different, and quite remarkable, situations. For instance, let us assume {that $F=F_1$, so that $u:F\rightarrow\mathbb{R}$ and \[\Sigma=\Sigma_{u}=\left\{(u(x_1),x_1)\in\overline{M}:x_1\in F\right\}\]}is a hypersurface. Observe that the hypothesis \textit{(i)} in Proposition \ref{prop:main:2} ensures that $(\overline{M},\overline{g})\prec (\overline{M},\overline{g}_2)$, hence if $(\Sigma,g)$ is spacelike, then it should also be spacelike $(\Sigma,\overline{g}_{2\,|\Sigma})$. A simple computation shows that $(\Sigma,\overline{g}_{2\,|\Sigma})$ is spacelike if and only if
\[
\left| \nabla^0 u \right|_0 < \alpha_2(u(x_1)),
\] for all $x_1\in F$. As the cones for $g$ are narrower than those in $\overline{g}_{2}$, the inequality $\left| \nabla^0 u \right|_0 \leq {k\,} \alpha_2(u({x_1}))$ for some ${k\,}<1$ follows, for instance, if we assume that the cones in $g$ are \emph{strictly contained} on those in $\overline{g}_{2\,|\Sigma}$, not being close to each other even asymptotically. 

With respect to the boundedness of the functions $\beta_u\alpha_1$ and $\beta_u\alpha_2$, let us analyse each function $\beta$ and $\alpha_i$ independently. Focusing on the function $\beta$, it is quite regular to ask the spacetime to have moderate proper time rate, or at least some kind of controlled asymptotic behaviour in the spatial directions. Concretely, for a fixed $t_0$, the function $\beta_{t_0}:F\rightarrow \mathbb{R}$ with $\beta_{t_0}(x)=\beta(t_0,x)$ is tipically assumed to have some kind of boundedness out of a compact set. For instance, by imposing convergence along \emph{radial} directions in spherically symmetric spacetimes. That is usually the case for asymptotically flat spacetimes, where the function $\beta_{t_0}$ should naturally converge to a non-zero value in asymptotic directions. In such a case, for each $t_0$ the function $\beta_{t_0}$ is bounded.

Regarding the functions $\alpha_i$, their boundedness is easily achievable if we restrict ourselves to submanifolds bounded away from the infinity, that is, if we consider only submanifolds living in a \emph{slab} of the spacetime, {i.e. a region between two slices}. We can also ensure that condition if we assume that the limit of $\alpha_i$ exists at the endpoints of the interval $I$. 

Finally, and as an immediate consequence of Proposition~\ref{prop:main:2}, we can give conditions for the parabolicity of any slice.

\begin{corollary}
	\label{prop:main:1}
	Consider $(\overline{M}^{n+m+1},\overline{g})$ an orthogonally splitted spacetime as in~\eqref{eq:1}. Assume that there exist functions $\alpha_i$ for $i=1,2$ and a Riemannian metric $g_0$ on $F$ satisfying (\Ref{eq:21}) and such that $(F,g_0)$ is a parabolic Riemannian metric. Then, for each $t\in \mathbb{R}$ where $\beta_t:F\rightarrow \mathbb{R}$ with $\beta_t(x)=\beta(t,x)$ is bounded, the slice $({F_t},g_t)$ is also a parabolic Riemannian manifold.
\end{corollary}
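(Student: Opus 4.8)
The plan is to realize each slice $F_t$ as the graph of a constant function and then invoke Proposition~\ref{prop:main:2} directly. Concretely, in the notation of that proposition I would take $F_1=F$ (so its ``$n$'' becomes $n+m$ and its ``$m$'' becomes $0$, with $F_2$ a single point), and let $u\equiv t$ be the constant function. Then the graph $\Sigma_{u,x_2}$ defined in~\eqref{eq:24} is precisely the slice $F_t=\{t\}\times F$. Moreover, since $u$ is constant we have $du=0$, so the induced metric~\eqref{eq:25} reduces to $g_{F_1}(v,w)=g_{u(x_1)}(v,w)=g_t(v,w)$; hence $(\Sigma_{u,x_2},g)$ is isometric to $(F_t,g_t)$, and proving the latter parabolic is the same as proving the former parabolic.

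It then remains to check the four hypotheses of Proposition~\ref{prop:main:2} for this choice. Hypotheses~\ref{prop:main:2-1} and~\ref{prop:main:2-2} are exactly the standing assumptions of the corollary: the existence of $\alpha_i$ and $g_0$ satisfying~\eqref{eq:21}, and the parabolicity of $(F,g_0)$. Hypothesis~\ref{prop:main:2-3} holds trivially, since $u$ constant gives $\nabla^0 u=0$, so $|\nabla^0 u|_0=0\le k\,\alpha_2(u)$ for any $0<k<1$, using $\alpha_2(t)>0$.

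The only substantive point is hypothesis~\ref{prop:main:2-4}. With $u\equiv t$ the quantities $\alpha_i(u)=\alpha_i(t)$ are positive constants and $\beta_u=\beta(t,\cdot)=\beta_t$, so $\inf\beta_u\alpha_2(u)=\alpha_2(t)\inf_F\beta_t$ and $\sup\beta_u\alpha_1(u)=\alpha_1(t)\sup_F\beta_t$. The corollary's hypothesis that $\beta_t$ be bounded, understood as in Definition~\ref{def:mod_prop_time} as $0<\inf_F\beta_t\le\sup_F\beta_t<\infty$, translates exactly into $0<\inf\beta_u\alpha_2(u)$ and $\sup\beta_u\alpha_1(u)<\infty$, which is hypothesis~\ref{prop:main:2-4}. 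I would flag here that the lower bound $\inf_F\beta_t>0$ cannot be dropped: over a non-compact $F$ the mere positivity of $\beta$ does not force a positive infimum, and this is precisely the role of the boundedness assumption on the slice.

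With the four hypotheses verified, Proposition~\ref{prop:main:2} yields that $(\Sigma_{u,x_2},g)=(F_t,g_t)$ is parabolic, which is the assertion. The argument carries no genuine difficulty; the only places calling for care are the interpretation of ``bounded'' so that~\ref{prop:main:2-4} really holds, and the bookkeeping of the dimension relabeling ($n\mapsto n+m$, $m\mapsto 0$) when specializing the proposition to the entire slice.
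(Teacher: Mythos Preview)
Your proof is correct and follows exactly the route the paper intends: the corollary is stated there without proof, merely as ``an immediate consequence of Proposition~\ref{prop:main:2}'', and your specialization $F_1=F$, $F_2=\{\mathrm{pt}\}$, $u\equiv t$ is precisely that immediate consequence spelled out. Your flag on the interpretation of ``bounded'' (requiring $\inf_F\beta_t>0$ in addition to $\sup_F\beta_t<\infty$) is a genuine clarification the paper leaves implicit.
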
 

\section{On the geometry of spacelike submanifolds in an orthogonally splitted spacetime}
\label{sec:tecnic}

The main goal of this section is to present some results regarding the geometry of {parabolic} spacelike submanifolds $\Sigma^n$ in an orthogonally splitted spacetime $(\overline{M},\overline{g})$ as in~\eqref{eq:1}. In fact, we will show that the geometry of such manifolds will be specially restricted inside orthogonally splitted spacetimes. There are several points of interest for the study of parabolic manifolds. On the one hand, parabolic manifolds are well known objects in mathematical physics since they appear, for example, in the description of the Brownian motion (see \cite{Gr}). On the other hand, as it was mentioned in Section \ref{sec:parabolicity}, parabolicity can be ensured by imposing a \emph{moderate volume growth} for geodesics balls (see \cite[Section 3]{Kaz}). Hence a spacelike submanifold will be parabolic if the expansion along (spacelike) radial geodesics has some controlled behaviour. Finally, in the two dimensional case, we show that any spacelike surface with non-negative Gaussian curvature also has such a restricted geometry.

In order to obtain the main results for parabolic spacelike submanifolds, we will impose some geometrical assumptions on the spacetime and we will ask the function on $\Sigma^n$ given by $\overline{g}(\partial_t,\overrightarrow{H})$ to be signed. Although this condition can seem a bit technical, in the case where $\overrightarrow{H}$ is {causal}, it is closely related to the concept of trapped submanifolds. In fact, very nice consequences will be obtained in this context. 

Before stating the main results of this section, recall that given any $p\in \overline{M}$, a vector $v\in T_p\overline{M}$ is said to be causal if and only if it is either timelike or lightlike, i.e. $\overline{g}(v,v)\leq 0$. As it is usual in General Relativity, we will consider $v=0$ as a lightlike vector field.

\smallskip

For most of the forthcoming results, the expression obtained in~\eqref{eq:20} will be essential. Let us recall that such an expression depends on the function $\beta$, a certain function $\theta$ and the trace of the map $\xi(V,W)=\left( \mathcal{L}_{\partial_t}g_t \right)(d\pi_{F}(V),d\pi_{F}(W))$, where $V,W\in \mathcal{X}(\Sigma)$. Our first result is a {general} rigidity theorem.

\begin{theorem}
	\label{thm:rig_general}
	Let $(\overline{M}^{n+m+1}=I\times F^{n+m},\overline{g})$ be an orthogonally splitted spacetime as in~\eqref{eq:1} and let $\Sigma^n$ be a spacelike parabolic submanifold in $\overline{M}$. Let us assume that the following conditions hold:
	\begin{enumerate}[label=(\roman*)]
		\item \label{thm:rig_general-1} $\mathrm{tr}\,\xi\geq 0$ and $\partial_{t}(\beta_{|\Sigma})\leq 0$ (resp. $\mathrm{tr}\,\xi\leq 0$ and $\partial_t(\beta_{|\Sigma})\geq 0$),
		\item \label{thm:rig_general-2} {$\overline{M}$ has moderate proper time rate over $\Sigma$},
		\item \label{thm:rig_general-3} the mean curvature vector of $\Sigma$ satisfies $\overline{g}(\partial_t,\overrightarrow H)\leq 0$ (resp. $\overline{g}(\partial_t,\overrightarrow H)\geq 0$), and
		\item \label{thm:rig_general-4} $\Sigma$ is bounded away from the past infinity (resp. from the future infinity).
	\end{enumerate}
	Then, $\Sigma$ is contained in a spacelike slice $F_{t_0}=\{t_0\}\times F$. Moreover, $\partial_t(\beta_{|\Sigma})=\mathrm{tr}\,\xi={\overline{g}(\partial_t,\overrightarrow{H})}=0$. {In particular, either $\overrightarrow{H}$ is spacelike or $\Sigma$ is minimal in $F_{t_0}$.}
\end{theorem}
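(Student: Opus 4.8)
The engine of the argument is the Laplacian formula \eqref{eq:20}. The plan is to read off from it that, under the sign hypotheses (i) and (iii), the distinguished function $\tau$ is superharmonic for a suitable metric, and then to invoke parabolicity to force $\tau$ to be constant. Concretely, in the first case the bracket appearing in \eqref{eq:20},
\[
\mathrm{tr}(\xi)-\partial_t(\ln\beta)\sinh^2(\theta)-2\overline{g}(\partial_t,\overrightarrow{H}),
\]
is a sum of three nonnegative terms: $\mathrm{tr}(\xi)\geq 0$ by (i), the middle term is $\geq 0$ since $\partial_t(\beta_{|\Sigma})\leq 0$ forces $\partial_t(\ln\beta)\leq 0$, and $-2\overline{g}(\partial_t,\overrightarrow{H})\geq 0$ by (iii). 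As the prefactor $-\tfrac12\beta^{n/(n-2)}$ is negative, this gives $\tilde{\Delta}\tau\leq 0$, i.e. $\tau$ is superharmonic with respect to $\tilde{g}=\beta^{2/(n-2)}g$ (assuming for the moment $n>2$, so that \eqref{eq:20} is available).

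The purpose of the conformal change is precisely to absorb the sign-indefinite cross term $-g(\nabla\tau,\nabla\ln\beta)$ of \eqref{eq:18}, which otherwise obstructs any conclusion on the sign of $\Delta\tau$. To transfer parabolicity from $(\Sigma,g)$ to $(\Sigma,\tilde g)$ I would use hypothesis (ii): moderate proper time rate over $\Sigma$ means $0<\inf_{\Sigma}\beta$ and $\sup_{\Sigma}\beta<\infty$, so the conformal factor $\beta^{2/(n-2)}$ is bounded between two positive constants and $\mathrm{id}\colon(\Sigma,g)\to(\Sigma,\tilde g)$ is a quasi-isometry in the sense of \eqref{eq:q-i}. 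Since parabolicity is a quasi-isometry invariant, $(\Sigma,\tilde g)$ is parabolic. Now hypothesis (iv) enters: $\tau$ is bounded from below, so $\tau-\inf_{\Sigma}\tau+1$ is a positive superharmonic function on the parabolic manifold $(\Sigma,\tilde g)$, hence constant. Therefore $\tau\equiv t_0$ and $\Sigma\subseteq F_{t_0}$.

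Once $\tau$ is constant, $\tilde{\Delta}\tau\equiv 0$ forces the bracket to vanish identically; since its three summands are individually nonnegative, each must vanish. This yields $\mathrm{tr}(\xi)=0$ and $\overline{g}(\partial_t,\overrightarrow{H})=0$, together with $\partial_t(\ln\beta)\sinh^2(\theta)=0$. For the final dichotomy I would use the splitting \eqref{eq:H_desc} with $S=F_{t_0}$ and unit normal $N=\partial_t/\sqrt{\beta}$: then $\overline{g}(\partial_t,\overrightarrow{H})=-\sqrt{\beta}\,(\mathrm{tr}_{|\Sigma}A)$, so $\overline{g}(\partial_t,\overrightarrow{H})=0$ gives $\mathrm{tr}_{|\Sigma}A=0$ and $\overrightarrow{H}=\overrightarrow{h}$ tangent to the slice; thus $\overrightarrow{H}$ is spacelike, and if it vanishes then $\Sigma$ is minimal in $F_{t_0}$. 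Here I would watch one bookkeeping point carefully: $\Sigma\subseteq F_{t_0}$ makes $\partial_t$ normal to $\Sigma$, so $\partial_t^{T}=0$ and $\sinh\theta\equiv 0$; consequently the summand $\partial_t(\ln\beta)\sinh^2(\theta)$ vanishes \emph{automatically} on the slice, and the asserted equality $\partial_t(\beta_{|\Sigma})=0$ does not simply drop out of the bracket — it is the one conclusion that I expect to require a separate justification.

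The principal analytic obstacle is twofold. First, the indefinite cross term is neutralised only by the conformal rescaling, and that rescaling preserves parabolicity only because of the boundedness in (ii); without it the argument collapses, which is exactly why moderate proper time rate is assumed. Second, \eqref{eq:20} degenerates when $n=2$, since the factor $n-2$ in \eqref{eq:19} vanishes and the cross term can no longer be absorbed conformally. I would remove this restriction by replacing the conformal Laplacian with the drifted operator $\mathcal{L}\tau:=\Delta\tau+g(\nabla\tau,\nabla\ln\beta)$, which by \eqref{eq:18} equals a negative multiple of the same bracket for \emph{every} $n\geq 2$, and by noting that $\beta$-weighted parabolicity (for the measure $\beta\,dV$) coincides with ordinary parabolicity when $\beta$ is bounded above and below, again by (ii). The reversed (second) case is handled identically after a time-orientation change.
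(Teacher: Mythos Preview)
Your proof for $n>2$ is essentially the paper's: the same conformal change $\tilde g=\beta^{2/(n-2)}g$, the same use of (ii) to transfer parabolicity via quasi-isometry, and the same appeal to \eqref{eq:20} together with (iv) to force $\tau$ constant. Your reading of the final dichotomy via \eqref{eq:H_desc} also matches.

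The one substantive difference is the handling of $n=2$. The paper does \emph{not} pass to a drifted Laplacian. Instead it increases the dimension by force: it replaces $\overline{M}$ by $\overline{M}_\ast=\overline{M}\times\mathbb{S}^1$ and $\Sigma$ by $\Sigma_\ast=\Sigma\times\mathbb{S}^1$, checks that all four hypotheses survive (parabolicity is preserved under product with a compact factor, $\overrightarrow H_\ast=\overrightarrow H$, and $\beta$, $g_t$ are unchanged), and then applies the $n=3$ case already established. Your weighted-Laplacian route is correct --- with $\beta$ bounded above and below on $\Sigma$, $\beta$-weighted parabolicity coincides with ordinary parabolicity via the capacity characterisation --- and has the advantage of treating all $n\ge 2$ uniformly from \eqref{eq:18} without an auxiliary construction. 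The paper's $\mathbb{S}^1$-product trick, by contrast, is more elementary and avoids invoking any theory of weighted manifolds.

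Your careful remark about $\partial_t(\beta_{|\Sigma})=0$ is well taken: once $\Sigma\subseteq F_{t_0}$ one has $\sinh\theta\equiv 0$, so the vanishing of the bracket in \eqref{eq:20} yields only $\mathrm{tr}\,\xi=0$ and $\overline g(\partial_t,\overrightarrow H)=0$, and says nothing about $\partial_t(\ln\beta)$ by itself. The paper's proof simply asserts all three equalities together ``from \eqref{eq:20}'' without isolating this point, so your caution here is sharper than the paper's own treatment.
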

\begin{proof}
	Let us assume firstly that $n>2$ and let us consider the conformal change of metric on $\Sigma$ given by $\tilde{g}=\beta^{\frac{2}{n-2}}g$. From assumption~\ref{thm:rig_general-2}, we immediately get that $(\Sigma, g)$ and $(\Sigma,\tilde{g})$ are quasi-isometric. Thus, as stated in Section~\ref{sec:parabolicity}, $(\Sigma,\tilde{g})$ is also parabolic.
	
	Taking into account assumptions~\ref{thm:rig_general-1} and~\ref{thm:rig_general-3} of the theorem,~\eqref{eq:20} yields $\tilde{\Delta}\tau\leq 0$. From~\ref{thm:rig_general-4} $\tau$ is bounded from below so, by the parabolicity of $(\Sigma,\tilde{g})$, $\tau=t_0$ for a certain $t_0\in\mathbb{R}$, i.e. $\Sigma$ is contained in the spacelike slice $F_{t_0}$. In particular, $\tilde{\Delta}\tau=0$, so from~\eqref{eq:20} we get $\partial_t(\beta_{|\Sigma})=\mathrm{tr}\,\xi=g(\partial_t,\overrightarrow{H})=0$. Consequently, either $\overrightarrow{H}$ is spacelike or it vanishes. The last assertion follows immediately from~\eqref{eq:H_desc}.
	
	In the case $n=2$, the conformal change cannot be applied. However, we can just consider the product manifolds $\overline{M}_\ast^{n+m+2}=\overline{M}^{n+m+1}\times\mathbb{S}^1$ and $\Sigma_\ast^3=\Sigma^2\times\mathbb{S}^1$ and apply the result to these new manifolds. In fact, just observe that such manifolds satisfy the assumptions of the theorem. Firstly, $\overline{M}_\ast=I\times (F^{n+m}\times \mathbb{S}^1)$ is also an orthogonally splitted spacetime with the metric $\overline{g}_\ast=-\beta dt^2+(g_t+g_{\mathbb{S}^1})$. Since the function $\beta$ remains the same and $g_{\mathbb{S}^1}$ does not depend on $t$, assumptions~\ref{thm:rig_general-1} and~\ref{thm:rig_general-2} also hold in this situation. Furthermore, since the product of a parabolic and a compact submanifold is also parabolic (see \cite{Kaz}), $\Sigma_\ast$ inherits the spatiality, parabolicity and boundedness conditions of $\Sigma$, so in particular~\ref{thm:rig_general-4} holds. Finally, it is easy to check that $\overrightarrow{H}_\ast=\overrightarrow{H}$, where $\overrightarrow{H}_\ast$ is the mean curvature vector of $\Sigma_\ast$ in $\overline{M}_\ast$, so~\ref{thm:rig_general-3} is also satisfied.
\end{proof}

\begin{remark}
	As direct consequences of Theorem~\ref{thm:rig_general} we can obtain some non-existence results. In fact, we only have to ask one of the inequalities in~\ref{thm:rig_general-1} or~\ref{thm:rig_general-3} to be strict in order to conclude that there do not exist any submanifold satisfying those assumptions. We will explicitly state such results in the context of trapped surfaces in Section~\ref{sec:trapped}.
\end{remark}

Even when $\Sigma$ is not a parabolic surface and we do not impose any boundedness assumption on $\beta$, we can also provide some information regarding the shape of spacelike submanifolds with signed $g(\partial_t,\overrightarrow{H})$.

\begin{theorem}
	\label{thm:non_ex_min}
	Let $(\overline{M}^{n+m+1}=I\times F^{n+m},\overline{g})$ be an orthogonally splitted spacetime as in~\eqref{eq:1} and let $\Sigma^n$ be a spacelike submanifold in $\overline{M}$. Let us assume that the following conditions hold:
	\begin{enumerate}[label=(\roman*)]
		\item \label{thm:non_ex_min-1} $\mathrm{tr}\,\xi\geq 0$ and $\partial_{t}(\beta_{|\Sigma})\leq 0$ (resp. $\mathrm{tr}\,\xi\leq 0$ and $\partial_t(\beta_{|\Sigma})\geq 0$),
		\item \label{thm:non-ex-min-3} the mean curvature vector of $\Sigma$ satisfies $\overline{g}(\partial_t,\overrightarrow H)\leq 0$ (resp. $\overline{g}(\partial_t,\overrightarrow H)\geq 0$).
	\end{enumerate} Then, $\tau$ cannot achieve a strict local minimum (resp. maximum) on $\Sigma$.
\end{theorem}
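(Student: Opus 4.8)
The plan is to read off the sign of $\Delta\tau$ directly from the expression~\eqref{eq:18}, rather than from the conformally rescaled version~\eqref{eq:20}. This choice matters: \eqref{eq:18} was derived \emph{before} the conformal change and is therefore valid for every $n\geq 2$, so no separate treatment of the surface case $n=2$ (and hence none of the $\mathbb{S}^1$-product device used in Theorem~\ref{thm:rig_general}) will be required. Moving the gradient term to the left, I would observe that $\tau$ satisfies
\begin{equation*}
	L\tau := \Delta\tau + g(\nabla\tau,\nabla(\ln\beta)) = -\frac{1}{\beta}\left(-\overline{g}(\partial_t,\overrightarrow{H}) + \tfrac{1}{2}\mathrm{tr}(\xi) - \tfrac{1}{2}\partial_t(\ln\beta)\sinh^2(\theta)\right),
\end{equation*}
where $L$ is a linear second-order operator on $\Sigma$ whose leading part is the Laplace--Beltrami operator, whose first-order (drift) coefficient is $\nabla(\ln\beta)$, and which has \emph{no zeroth-order term}; its coefficients are smooth since $\beta>0$ is smooth.

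Next I would pin down the sign of the right-hand side. In the first case, assumption~\ref{thm:non_ex_min-1} gives $\mathrm{tr}(\xi)\geq 0$ together with $\partial_t(\ln\beta)\leq 0$ (the latter because $\beta>0$), so that $-\partial_t(\ln\beta)\sinh^2(\theta)\geq 0$ since $\sinh^2(\theta)\geq 0$; and assumption~\ref{thm:non-ex-min-3} gives $-\overline{g}(\partial_t,\overrightarrow{H})\geq 0$. Adding these three non-negative contributions and multiplying by $-1/\beta<0$ yields $L\tau\leq 0$ pointwise on all of $\Sigma$. Thus $\tau$ is a supersolution of the elliptic operator $L$, and this holds throughout $\Sigma$, not merely at a critical point, so the maximum principle can be invoked on any ball.

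The conclusion would then follow from the strong minimum principle of E.~Hopf for operators of the form $L=\Delta+g(b,\nabla\,\cdot\,)$ without zeroth-order term: if such a supersolution attains a local minimum at an interior point, it must be constant on a neighborhood of that point. Concretely, if $\tau$ had a strict local minimum at some $p$, I would take a small geodesic ball $B$ on which $\tau\geq\tau(p)$; then $\tau$ attains its minimum over $B$ at the interior point $p$, so $L\tau\leq 0$ forces $\tau$ to be constant on $B$, contradicting strictness. The resp.\ statement is entirely dual: the respective forms of~\ref{thm:non_ex_min-1} and~\ref{thm:non-ex-min-3} give $L\tau\geq 0$, so $\tau$ is a subsolution and the strong maximum principle rules out a strict local maximum.

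I expect the only delicate points to be bookkeeping rather than conceptual. First, one must be careful to use~\eqref{eq:18} and not~\eqref{eq:20}, so that the argument genuinely covers $n=2$; the operator $L$ is intrinsically defined for all $n\geq 2$ even though the conformal Laplacian $\tilde{\Delta}$ is not. Second, one must state Hopf's principle for the drift operator $L$ correctly: it is precisely the absence of a zeroth-order term that lets the minimum principle apply to a supersolution with no sign restriction imposed on $\tau$ itself. Both are routine once the sign of $L\tau$ has been fixed, and notably the argument uses neither parabolicity, nor completeness, nor any boundedness of $\tau$ or $\beta$, consistent with the hypotheses actually assumed.
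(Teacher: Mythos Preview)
Your proof is correct, and it takes a genuinely different route from the paper's. The paper argues via the conformally rescaled Laplacian~\eqref{eq:20}: assuming a strict local minimum at $p_0$, it chooses a small compact neighborhood $U$ with $\tau_{|\partial U}=t_1$ constant, applies the divergence theorem to $X=(\tau-t_1)\tilde{\nabla}\tau$, and from $(\tau-t_1)\tilde{\Delta}\tau\geq 0$ deduces that $\tilde{\nabla}\tau$ vanishes on $U$. Because~\eqref{eq:20} is only available for $n>2$, the case $n=2$ is handled separately by the $\mathbb{S}^1$-product trick of Theorem~\ref{thm:rig_general}. You instead work directly with~\eqref{eq:18}, absorb the gradient term into a drift operator $L=\Delta+g(\nabla(\ln\beta),\nabla\,\cdot\,)$, verify $L\tau\leq 0$, and invoke Hopf's strong minimum principle for elliptic operators without zeroth-order term.

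What each buys: your argument is dimension-uniform (no special pleading for $n=2$) and needs no construction of a level-set neighborhood $U$ with constant boundary values, which is the one slightly delicate step in the paper's version. On the other hand, the paper's integral identity is entirely self-contained and does not appeal to an external maximum-principle statement; it is in effect a hands-on proof of the very principle you quote. Both approaches exploit exactly the same sign information, and both are local and require neither parabolicity nor bounds on $\beta$ or $\tau$.
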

\begin{proof}
	Suppose that $n>2$. Let us assume by contradiction that $\tau$ attains a strict local minimum at a certain $p_0\in\Sigma$, and let us denote it by $t_0=\tau(p_0)$. Then, there exists a small enough compact neighbourhood $U$ of $p_{0}$ in $\Sigma$ such that $\tau_{|\partial U}=t_1$ is constant and $\tau-t_1\leq 0$ in $U$.
	
	Let us consider on $U$ the vector field $X=(\tau-t_1)\tilde{\nabla} \tau$. Since $X_{|\partial U}=0$, the divergence theorem yields
	
	\begin{equation}
		\label{eq:29}
		0=\int_U\mathrm{div}(X) = \int_U \left( \tilde{g}(\tilde{\nabla} \tau, \tilde{\nabla} \tau) + (\tau-t_1)\tilde{\Delta} \tau \right).
	\end{equation}
	{Following the reasoning} in Theorem~\ref{thm:rig_general}, under the assumptions of the theorem $\tilde{\Delta} \tau\leq 0$, and so, restricted to $U$, {it holds} $\left( \tau-t_1 \right)\tilde{\Delta} \tau\geq 0$. Hence, from~\eqref{eq:29} if follows that $\tilde{g}( \tilde{\nabla}\tau,\tilde{\nabla}\tau )= 0$. Consequently, $\tau$ is constant on $U$, which contradicts the fact of $p_0$ being a strict local minimum of $\tau$.
	
	In the case $n=2$ we can proceed as in the proof of Theorem~\ref{thm:rig_general}.
\end{proof}

\begin{remark}
	The {above} results have been stated under general technical assumptions. However, those assumptions are reasonable from a physical point of view. In fact, in Theorems~\ref{thm:rig_general} and~\ref{thm:non_ex_min} assumption~\ref{thm:rig_general-1} is weaker that asking the spacetime to be non-contracting (resp. non-expanding). Furthermore, {obviously} assumption~\ref{thm:rig_general-2} in Theorem~\ref{thm:rig_general} 	is weaker than asking $\overline{M}$ to have moderate proper time rate. Finally, as stated before, assumption~\ref{thm:rig_general-3} in Theorem~\ref{thm:rig_general} and assumption~\ref{thm:non-ex-min-3} in Theorem~\ref{thm:non_ex_min} are, as we will see in the forthcoming section, closely related to the concept of trapped submanifolds.
\end{remark}

We can also consider the case where the spacetime has a local phase change.

\begin{theorem}
	\label{thm:main:3}
	Let $(\overline{M}^{n+m+1}=I\times F^{n+m},\overline{g})$ be an orthogonally splitted spacetime as in~\eqref{eq:1} and let $\Sigma^n$ be a spacelike parabolic submanifold in $\overline{M}$. Let us assume that the following conditions hold: 
	
	\begin{enumerate}[label=(\roman*)]
		\item \label{thm:main:3-1} there exist $t_0\in\mathbb{R}$ such that $\mathrm{tr}\,\xi\geq 0$ and $\partial_{t}(\beta_{|\Sigma})\leq 0$ for all $t<t_0$, and $\mathrm{tr}\,\xi\leq 0$  and $\partial_{t}(\beta_{|\Sigma})\geq 0$ for all $t_0<t$,
		\item \label{thm:main:3-2} {$\overline{M}$ has moderate proper time rate over $\Sigma$},
		\item \label{thm:main:3-3} the mean curvature vector of $\Sigma$ satisfies $(\tau-t_0)\overline{g}(\partial_t,\overrightarrow H)\geq 0$ and
		\item \label{thm:main:3-4} $\Sigma$ is bounded away from the infinity.
	\end{enumerate}
	Then, $\Sigma$ is contained in a spacelike slice $F_{t_0}=\{t_0\}\times F$. Moreover, $\partial_t(\beta_{|\Sigma})=\mathrm{tr}\,\xi=0$ and either $\overrightarrow{H}$ is spacelike or $\Sigma$ is minimal in $F_{t_0}$.
\end{theorem}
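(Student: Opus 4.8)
The strategy is to run the same parabolicity argument as in Theorem~\ref{thm:rig_general}, but now exploiting the sign structure imposed by the phase change rather than a single global sign. As before, I assume first that $n>2$ and perform the conformal change $\tilde g=\beta^{2/(n-2)}g$ on $\Sigma$; assumption~\ref{thm:main:3-2} guarantees that $(\Sigma,g)$ and $(\Sigma,\tilde g)$ are quasi-isometric, so $(\Sigma,\tilde g)$ remains parabolic. The key point is then to check that the function $\tau$ still satisfies a definite-sign inequality for the Laplacian $\tilde\Delta$, even though the hypotheses on $\mathrm{tr}\,\xi$ and $\partial_t(\beta_{|\Sigma})$ reverse across $t_0$.

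The heart of the argument is a sign analysis of the right-hand side of~\eqref{eq:20}. Recall
\[
\tilde{\Delta}\tau=-\tfrac12\beta^{\frac{n}{n-2}}\bigl(\mathrm{tr}(\xi)-\partial_t(\ln\beta)\sinh^2\theta-2\,\overline g(\partial_t,\overrightarrow H)\bigr).
\]
I would argue pointwise according to the value of $\tau$. Where $\tau<t_0$, assumption~\ref{thm:main:3-1} gives $\mathrm{tr}\,\xi\geq 0$ and $\partial_t(\beta_{|\Sigma})\leq 0$, while \ref{thm:main:3-3} forces $\overline g(\partial_t,\overrightarrow H)\leq 0$ there; hence the bracket is non-negative and $\tilde\Delta\tau\leq 0$. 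Where $\tau>t_0$ the three signs all reverse, giving a non-positive bracket, hence $\tilde\Delta\tau\geq 0$. This is exactly the sign pattern that makes the phase change work: $\tau$ is superharmonic below the level $t_0$ and subharmonic above it. Equivalently, the function $\tilde v:=(\tau-t_0)$ has $\tilde v\,\tilde\Delta\tau\le 0$ pointwise, so that $\max(\tau-t_0,0)$ is subharmonic and $\min(\tau-t_0,0)$ is superharmonic; together with \ref{thm:main:3-4}, which bounds $\tau$ both from above and from below, I can feed a bounded non-negative subharmonic and a bounded non-negative superharmonic function into parabolicity.

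Concretely I would set $w^+=\max(\tau-t_0,0)$ and $w^-=\max(t_0-\tau,0)$. Both are non-negative, bounded (by~\ref{thm:main:3-4}), continuous, and smooth away from the level set $\{\tau=t_0\}$; on $\{\tau>t_0\}$ one has $\tilde\Delta w^+=\tilde\Delta\tau\ge 0$, and on $\{\tau<t_0\}$ one has $\tilde\Delta w^-=-\tilde\Delta\tau\ge 0$, with $w^\pm\equiv 0$ on the complementary region. Thus $w^+$ is a bounded non-negative subharmonic function and $w^-$ is a bounded non-negative superharmonic function on the parabolic manifold $(\Sigma,\tilde g)$, whence each must be constant; since each vanishes on a nonempty open set (or at least attains its infimum $0$), both vanish identically, forcing $\tau\equiv t_0$. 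Then $\tilde\Delta\tau\equiv 0$ and~\eqref{eq:20} together with the sign hypotheses give $\partial_t(\beta_{|\Sigma})=\mathrm{tr}\,\xi=\overline g(\partial_t,\overrightarrow H)=0$, so by~\eqref{eq:H_desc} either $\overrightarrow H$ is spacelike or $\Sigma$ is minimal in $F_{t_0}$. The case $n=2$ is handled exactly as in Theorem~\ref{thm:rig_general}, by passing to $\overline M\times\mathbb S^1$ and $\Sigma\times\mathbb S^1$.

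The main obstacle I anticipate is the regularity of $w^\pm$ across the level set $\{\tau=t_0\}$: these functions are only Lipschitz there, so applying a smooth parabolicity criterion requires care. The cleanest remedy is to use the weak/distributional form of the maximum principle for parabolic manifolds (a bounded subharmonic function in the distributional sense that attains its supremum, or is bounded, is constant), for which the non-smooth gluing of a subharmonic and a harmonic piece is admissible since $\max$ of subharmonic functions is subharmonic. Alternatively one avoids the level set entirely by working separately on the open sets $\{\tau>t_0\}$ and $\{\tau<t_0\}$ and invoking parabolicity of $(\Sigma,\tilde g)$ directly on the bounded superharmonic function $\tau$ restricted below $t_0$ and the bounded subharmonic $\tau$ above—this is the step where one must confirm that parabolicity, rather than mere completeness, is genuinely being used, as it supplies the Liouville-type conclusion from the bounded one-sided Laplacian sign.
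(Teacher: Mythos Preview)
Your pointwise sign analysis of $\tilde{\Delta}\tau$ on the two regions $\{\tau<t_0\}$ and $\{\tau>t_0\}$ is exactly the right computation, and it is the heart of the argument. (Two slips of the pen: the product $(\tau-t_0)\tilde{\Delta}\tau$ is $\ge 0$, not $\le 0$, by your own case analysis; and after correctly computing $\tilde{\Delta}w^-\ge 0$ you then call $w^-$ superharmonic.) The difficulty is in the packaging. Your functions $w^\pm=(\tau-t_0)^\pm$ are only Lipschitz across $\{\tau=t_0\}$, and neither workaround you propose closes the gap as written. The ``work separately on $\{\tau>t_0\}$ and $\{\tau<t_0\}$'' alternative fails outright: parabolicity is a global property of $(\Sigma,\tilde g)$ and does \emph{not} pass to open subsets, so you cannot invoke it on those pieces. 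The distributional route can be salvaged (Kato's inequality gives $\tilde{\Delta}(w^\pm)\ge (\tilde{\Delta}\tau)\chi_{\{\pm(\tau-t_0)>0\}}\ge 0$ weakly), but then you need the Liouville property for \emph{weak} bounded subharmonic functions on parabolic manifolds, which is true but lies outside the smooth criterion the paper works with.

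The paper avoids all of this with a single smooth test function: set $\sigma=(\tau-t_0)^2$. Then
\[
\tilde{\Delta}\sigma \;=\; 2\,\tilde g(\tilde{\nabla}\tau,\tilde{\nabla}\tau)\;+\;2(\tau-t_0)\,\tilde{\Delta}\tau \;\ge\; 0,
\]
since your sign analysis gives $(\tau-t_0)\tilde{\Delta}\tau\ge 0$ pointwise and the gradient term is manifestly non-negative. By assumption~(iv) the function $\sigma$ is bounded, so parabolicity (for smooth functions, no distributional detour needed) forces $\sigma$ constant; hence $\tilde{\nabla}\tau=0$ and $(\tau-t_0)\tilde{\Delta}\tau=0$, from which the remaining conclusions follow as in Theorem~\ref{thm:rig_general}. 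Squaring is precisely the missing idea that converts your two one-sided inequalities into a single global smooth subharmonicity.
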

\begin{proof}
	As in the previous results we can assume $n>2$, since in the case $n=2$ we can easily increase the dimension. Let us consider the function $\sigma=(\tau-t_0)^{2}$ defined over $\Sigma$. It follows that
	
	\begin{equation}
		\label{eq:30}
		\tilde{\Delta} \sigma = 2 \left( \tilde{g}(\tilde{\nabla}\tau,\tilde{\nabla}\tau) + (\tau-t_0)\tilde{\Delta}{(\tau-t_0)} \right).
	\end{equation}
	Now observe that, from (\Ref{eq:20}) and assumptions~\ref{thm:main:3-1} and~\ref{thm:main:3-3} of the theorem we conclude that $(\tau-t_0)\tilde{\Delta} \tau$ is always non-negative. In particular, $\sigma$ is a subharmonic function defined over $\Sigma$. From assumption~\ref{thm:main:3-4} $\tau$ is bounded both from above and from below, so it is $\sigma$. Thus, it should be constant by the parabolicity of $\Sigma$ and the result follows as in Theorem~\ref{thm:rig_general}.
\end{proof}

{Let us observe that assumption~\ref{thm:main:3-3} of Theorem~\ref{thm:main:3} is satisfied in the particular case where $\Sigma^n$ is a maximal submanifold, whereas assumtpion~\ref{thm:main:3-1} generalizes the hypothesis of $\overline{M}$ having a local contracting phase change at $t_0$ (see Section \ref{sec:orthogospace} for the definition of orthogonally splitted spacetimes with a local phase change).}

\section{On trapped surfaces in orthogonally splitted spacetimes}
\label{sec:trapped}
As an application of the previous results, we will focus on this section on the study of trapped surfaces. Although we will restrict to the standard concepts of trapped surfaces in $4$-dimensional physically realistic spacetimes, let us remark that all the results in this section will be easily extensible for submanifolds of any codimension in a spacetime of arbitrary dimension.

Let us begin by introducing the basic concepts regarding trapped surfaces that we will require for this section (see \cite{Kriele} and \cite{MarsSeno} for details):

\begin{definition}
	\label{def:main:2}
	Let $(\overline{M}^4,\overline{g})$ be a $4$-dimensional spacetime, and consider $\Sigma^2$ a surface of $\overline{M}$ with mean curvature vector $\Hm$. Then, we will say that $\Sigma$ is:
	
	\begin{enumerate}[label=(\roman*)]
		\item\label{def:ftr1} \emph{future trapped} if $\Hm$ is timelike and future pointing all over $\Sigma$.
		\item\label{def:ftr2} \emph{nearly future trapped} if $\Hm$ is causal and future pointing all over $\Sigma$, and timelike at at least one point of $\Sigma$.
		\item\label{def:ftr3} \emph{weakly future trapped} if $\Hm$ is causal, future pointing and non-zero at at least one point of $\Sigma$.
		\item\label{def:ftr4} \emph{marginally future trapped} if $\Hm$ is lightlike, future pointing and non-zero at at least one point of $\Sigma$.
		\item\label{def:ftr5} \emph{extremal} if $\Hm=0$ all over $\Sigma$.
	\end{enumerate}
	The \emph{past} analogous notions of~\ref{def:ftr1},~\ref{def:ftr2},~\ref{def:ftr3} and~\ref{def:ftr4} are defined in a similar way considering $\Hm$ to be past pointing.
\end{definition}

As we can see, if $\Sigma$ is (nearly, weakly, marginally) future trapped or extremal, then $g(\partial_t,\Hm)\leq 0$, and so conditions \ref{thm:rig_general-3} in Theorem~\ref{thm:rig_general} and~\ref{thm:non-ex-min-3} in Theorem \ref{thm:non_ex_min} are satisfied. Moreover, condition~\ref{thm:main:3-3} in Theorem~\ref{thm:main:3} is immediately satisfied for extremal surfaces. Hence, such theorems have some nice interpretations in terms of trapped surfaces. For instance, as a direct consequence of Theorem~\ref{thm:rig_general} we obtain a first non-existence result for parabolic trapped surfaces.

\begin{theorem}
	\label{thm:main:1}
	{Let $(\overline{M}^4,\overline{g})$ be a non-contracting (resp. non-expanding) orthogonally splitted spacetime with moderate proper time rate. Then, there is no parabolic (nearly, weakly, marginally) future (resp. past) trapped surface in $\overline{M}$ bounded away from the past (resp. future) infinity. Furthermore, if it exists any parabolic extremal surface in $\overline{M}$ bounded away from the past (resp. future) infinity, it is contained in a slice $F_{t_0}$.}
\end{theorem}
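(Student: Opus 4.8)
The plan is to derive Theorem~\ref{thm:main:1} as a direct specialization of Theorem~\ref{thm:rig_general} to the $4$-dimensional setting, where $n=2$ and $m=0$, so that $\Sigma^2$ is a $2$-codimensional spacelike surface. First I would verify that the hypotheses of Theorem~\ref{thm:rig_general} are met. The assumption that $\overline{M}$ is non-contracting translates, by the discussion following~\eqref{eq:contracting}, into $(\mathcal{L}_{\partial_t}\overline{g})$ being positive semi-definite; restricting to $\mathfrak{X}(\Sigma)$ and recalling the definition of $\xi$ in~\eqref{eq:14}, this gives $\mathrm{tr}\,\xi\geq 0$, and together with $\partial_t\beta\leq 0$ it yields assumption~\ref{thm:rig_general-1}. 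The hypothesis of moderate proper time rate is exactly assumption~\ref{thm:rig_general-2} (in its strongest form, over all of $\overline{M}$, hence certainly over $\Sigma$). Being \emph{bounded away from the past infinity} is assumption~\ref{thm:rig_general-4}.

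The key observation, already noted in the paragraph preceding the statement, is that for any future-pointing causal mean curvature vector one has $\overline{g}(\partial_t,\Hm)\leq 0$: indeed $\partial_t$ is past-pointing timelike (so $-\partial_t$ is future-pointing), and the Lorentzian inner product of two future-pointing causal vectors is nonpositive. This establishes assumption~\ref{thm:rig_general-3}. Thus every type of (nearly, weakly, marginally) future trapped surface, as well as every extremal surface, automatically satisfies the sign condition on $\overline{g}(\partial_t,\Hm)$, and Theorem~\ref{thm:rig_general} applies verbatim.

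Applying the conclusion of Theorem~\ref{thm:rig_general}, the surface $\Sigma$ must be contained in a slice $F_{t_0}$ and must satisfy $\overline{g}(\partial_t,\Hm)=0$, with either $\Hm$ spacelike or $\Sigma$ minimal in $F_{t_0}$. The final step is to translate this rigidity into a \emph{non-existence} statement for the genuinely trapped cases. I would argue by contradiction: if $\Sigma$ were nearly, weakly, or marginally future trapped, then $\Hm$ would be causal and future-pointing, and nonzero (and timelike at a point, in the nearly case) somewhere on $\Sigma$. But $\overline{g}(\partial_t,\Hm)=0$ forces $\Hm$ to be orthogonal to the timelike direction $\partial_t$ at every point, which for a causal vector in a Lorentzian space means $\Hm$ is either zero or spacelike; in particular no nonzero future-pointing causal $\Hm$ can exist. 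This contradicts the defining property in each of cases~\ref{def:ftr2},~\ref{def:ftr3},~\ref{def:ftr4}, proving non-existence. For the extremal case ($\Hm\equiv 0$), there is no contradiction, and the rigidity conclusion simply reads that $\Sigma$ lies in a slice $F_{t_0}$, which is precisely the stated ``furthermore'' clause. The resp.\ (past) case follows identically after reversing time orientation.

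I expect the only delicate point to be the linear-algebra lemma that a causal vector $\Hm$ orthogonal to a timelike vector $\partial_t$ must be spacelike or zero; this is standard but is the crux that converts the equality $\overline{g}(\partial_t,\Hm)=0$ into the exclusion of nonzero causal mean curvature. Everything else is a matter of matching hypotheses, and the passage from $n=2$ to the conformal argument is already handled inside Theorem~\ref{thm:rig_general} itself via the product trick with $\mathbb{S}^1$.
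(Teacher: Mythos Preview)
Your approach is exactly the one the paper takes: the theorem is stated as a direct consequence of Theorem~\ref{thm:rig_general}, and the paper's only remark is precisely your ``key observation'' that for a (nearly, weakly, marginally) future trapped or extremal surface one has $\overline{g}(\partial_t,\Hm)\leq 0$, so hypothesis~\ref{thm:rig_general-3} holds. Two small slips to fix: the codimension is $m=1$ (so that $n+m+1=4$), not $m=0$; and $\partial_t$ is \emph{future}-pointing in the paper's conventions (it is $\nabla\mathcal{T}=-\tfrac{1}{\beta}\partial_t$ that is past-pointing), which is what actually makes $\overline{g}(\partial_t,\Hm)\leq 0$ for future-pointing causal $\Hm$---with your stated orientation the inequality would come out reversed.
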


\begin{remark}
	{In fact, Theorem~\ref{thm:main:1} could be stated under a slightly weaker assumption. It is not necessary to ask $\overline{M}$ to have moderate proper time rate, but just to have moderate proper time rate over any possible surface satisfying the conditions of the theorem.}
\end{remark}

Observe that Theorem~\ref{thm:main:1} provides three conditions to ensure the non-existence of any kind of trapped surface: monotonicity, parabolicity and moderate proper time. If we consider the case where $\beta\equiv 1$, the latter is directly satisfied, and {the monotonicity of a certain physically realistic subset of $\overline{M}$} can be provided by imposing some initial condition at one of the slices $F_{t_0}$ and a (sectional) curvature condition for timelike planes (in the spirit of Proposition \ref{auxiliarybeta1}). In this sense, from Proposition~\ref{auxiliarybeta1} and Theorem~\ref{thm:main:1} we can easily deduce the following result.

\begin{corollary}\label{cor:6.7} 
	Let $(\overline{M}^4,\overline{g})$ be an orthogonally splitted spacetime as in~\eqref{eq:1} with $\beta\equiv 1$. Assume that, for some $t_0$, the shape operator of the slice $F_{t_0}$ is positive (resp. negative) semi-definite, and that the sectional curvatures of {timelike planes of} the spacetime are non-negative. Then, there are no parabolic (nearly, weakly, marginally) future (resp. past)  trapped surfaces contained in $I^+(F_{t_0})$ (resp. $I^-(F_{t_0})$). Furthermore, if it exists any parabolic extremal surface in $I^+(F_{t_0})$ (resp. $I^-(F_{t_0})$), it is contained in a slice.
\end{corollary}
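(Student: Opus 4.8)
The plan is to combine the two cited results directly: Proposition~\ref{auxiliarybeta1} supplies exactly the monotonicity hypothesis that Theorem~\ref{thm:main:1} demands of its ambient spacetime, while the standing assumption $\beta\equiv 1$ disposes of the moderate proper time rate requirement for free. I would treat the future case (shape operator positive semi-definite) in detail and recover the past case by reversing the time orientation.

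First I would invoke Proposition~\ref{auxiliarybeta1}. Since $\beta\equiv 1$, the shape operator of $F_{t_0}$ is positive semi-definite, and the sectional curvatures of timelike planes are non-negative, that proposition guarantees that $(I^+(F_{t_0}),\overline{g})$ is itself a non-contracting orthogonally splitted spacetime. This is the crucial step, as it converts the curvature-plus-initial-slice data into the global monotonicity condition needed to run Theorem~\ref{thm:main:1} on a genuine orthogonally splitted spacetime.

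Next I would check that the two remaining hypotheses of Theorem~\ref{thm:main:1} hold for this sub-spacetime. Moderate proper time rate is immediate, since $\beta\equiv 1$ forces $\inf\beta=\sup\beta=1$. Moreover, because $t=\pi_{\mathbb{R}}$ is a temporal function, every point of $I^+(F_{t_0})$ satisfies $t>t_0$; hence for any surface $\Sigma$ contained in $I^+(F_{t_0})$ the distinguished function $\tau$ is bounded from below by $t_0$, i.e. $\Sigma$ is bounded away from the past infinity.

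Finally, applying Theorem~\ref{thm:main:1} to the non-contracting spacetime $(I^+(F_{t_0}),\overline{g})$ yields both conclusions simultaneously: there is no parabolic (nearly, weakly, marginally) future trapped surface contained in $I^+(F_{t_0})$, and any parabolic extremal surface there must lie in a slice. The past case follows verbatim after reversing the time orientation, replacing $I^+$ by $I^-$ and ``positive'' by ``negative'' in the appeal to Proposition~\ref{auxiliarybeta1}. I do not anticipate a serious obstacle here, since this is an assembly of two already-established results; the only point deserving care is confirming that $I^+(F_{t_0})$ genuinely inherits the orthogonally-splitted structure of~\eqref{eq:1}, which is precisely what Proposition~\ref{auxiliarybeta1} delivers.
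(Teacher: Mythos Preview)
Your proposal is correct and follows exactly the route the paper intends: the text preceding the corollary states that it is deduced ``from Proposition~\ref{auxiliarybeta1} and Theorem~\ref{thm:main:1}'', and you have spelled out precisely that assembly, including the verification that $\beta\equiv 1$ yields moderate proper time rate and that containment in $I^+(F_{t_0})$ forces $\tau>t_0$, hence boundedness away from the past infinity.
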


Regarding the parabolicity, and as mentioned in Section~\ref{sec:parabolicity}, in the above non-existence results we can replace such a condition by completeness  {and non-negativeness of the Gaussian curvature function, or more generally by} the finite total curvature condition. In fact, we also have the following result:

\begin{corollary}
	Under the hypothesis of Theorem \ref{thm:main:1}, there are no (nearly, weakly, marginally) future (resp. past) trapped surface bounded  {away from the past (resp. future) infinity} with finite  {total} curvature. In particular, there is no future (resp. past) trapped surface (of any kind), bounded away from the past (resp. future) infinity with  {non-}negative Gaussian curvature  {outside a compact set.} 
\end{corollary}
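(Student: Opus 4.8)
The plan is to reduce the statement to Theorem~\ref{thm:main:1} by replacing the parabolicity hypothesis with the finite total curvature condition, using the parabolicity criteria recalled in Section~\ref{sec:parabolicity}. First I would observe that, since $\overline{M}$ is $4$-dimensional, any trapped surface $\Sigma$ is a $2$-dimensional spacelike submanifold, and the induced metric $g$ makes $(\Sigma,g)$ a complete Riemannian surface (completeness being implicit in the finite total curvature hypothesis, as in the definition given in Section~\ref{sec:parabolicity}).

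The key step is then to invoke the classical result, attributed to Ahlfors and Blanc--Fiala--Huber and extended to the finite total curvature setting (see~\cite{Lee}), asserting that any complete Riemannian surface with finite total curvature is parabolic. Applying this to $(\Sigma,g)$, the surface is parabolic, and hence Theorem~\ref{thm:main:1} applies verbatim: under the non-contracting (resp. non-expanding) and moderate proper time rate hypotheses, no such (nearly, weakly, marginally) future (resp. past) trapped surface bounded away from the past (resp. future) infinity can exist.

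For the second assertion, I would note that if $\Sigma$ has non-negative Gaussian curvature outside a compact set $C\subseteq\Sigma$, then the negative part $\max(0,-K)$ of the Gaussian curvature is supported in $C$, hence
\[
\int_\Sigma \max(0,-K)\,dA_\Sigma = \int_C \max(0,-K)\,dA_\Sigma < \infty,
\]
so $\Sigma$ has finite total curvature and the conclusion follows from the first part of the corollary. The argument presents no genuine obstacle: it is a direct consequence of combining the parabolicity criteria from Section~\ref{sec:parabolicity} with Theorem~\ref{thm:main:1}, the only point requiring some care being the verification that the finite total curvature condition (and, a fortiori, the non-negative-curvature-outside-a-compact-set condition) indeed forces parabolicity in the $2$-dimensional setting.
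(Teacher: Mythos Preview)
Your proposal is correct and follows exactly the approach the paper intends: the corollary is stated without proof, but the preceding paragraph explains precisely that one replaces the parabolicity hypothesis in Theorem~\ref{thm:main:1} by completeness together with the finite total curvature condition (or the stronger non-negative Gaussian curvature outside a compact set), invoking the criteria from Section~\ref{sec:parabolicity}. Your verification that non-negative curvature outside a compact set implies finite total curvature, and hence parabolicity, is just the unpacking of what the paper leaves implicit.
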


However, even in the case where we cannot ensure parabolicity, Theorem~\ref{thm:non_ex_min} allows us to give some information regarding the shape of trapped surfaces. In fact, we can present the following immediate consequence of Theorem~\ref{thm:non_ex_min}.

\begin{theorem}\label{trappedMinMax}
	Let $(\overline{M}^4,\overline{g})$ be a non-contracting (resp. non-expanding) orthogonally splitted spacetime. Then, for no (nearly, weakly, marginally) future (resp. past) trapped surface, the function $\tau$ achieves a strict local minimum (resp. maximum).
\end{theorem}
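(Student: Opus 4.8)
The plan is to read off Theorem~\ref{trappedMinMax} as a direct specialization of Theorem~\ref{thm:non_ex_min}, so the whole proof amounts to verifying that the hypotheses of the latter are satisfied in the trapped-surface setting. First I would fix the non-contracting case (the non-expanding case being entirely symmetric under time reversal). The ambient spacetime $(\overline{M}^4,\overline{g})$ is $4$-dimensional, so a surface $\Sigma$ has dimension $n=2$ and codimension $m+1=2$; it is an orthogonally splitted spacetime, and being non-contracting means precisely that $(\mathcal{L}_{\partial_t}\overline{g})_{|t}$ is positive semi-definite for every $t\in I$, together with $\partial_t\beta\leq 0$ (see~\eqref{eq:expanding} and the remark following it).

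Next I would check assumption~\ref{thm:non_ex_min-1} of Theorem~\ref{thm:non_ex_min}. The condition $\partial_t(\beta_{|\Sigma})\leq 0$ is immediate from the non-contracting hypothesis restricted to $\Sigma$. For $\mathrm{tr}\,\xi\geq 0$, recall from~\eqref{eq:14} that $\xi(V,W)=(\mathcal{L}_{\partial_t}g_t)(d\pi_F(V),d\pi_F(W))$ and that its trace is $\mathrm{tr}(\xi)=\sum_{i=1}^n(\mathcal{L}_{\partial_t}g_t)(d\pi_F(E_i),d\pi_F(E_i))$ over an orthonormal frame $\{E_i\}$ of $\Sigma$. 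Since $\Sigma$ is spacelike each $E_i$ is spacelike, and because $(\mathcal{L}_{\partial_t}\overline{g})(\partial_t,\partial_t)=0$ one has $(\mathcal{L}_{\partial_t}g_t)(d\pi_F(E_i),d\pi_F(E_i))=(\mathcal{L}_{\partial_t}\overline{g})(E_i,E_i)$; positive semi-definiteness of $\mathcal{L}_{\partial_t}\overline{g}$ then gives each summand $\geq 0$, hence $\mathrm{tr}\,\xi\geq 0$. This is the step I expect to require the most care, since it is where the precise relationship between $\xi$ and the Lie derivative of the full metric, and the spacelike character of $\Sigma$, must be used to convert the semi-definiteness statement into the sign of the trace.

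Finally I would verify assumption~\ref{thm:non-ex-min-3}, namely $\overline{g}(\partial_t,\overrightarrow H)\leq 0$. This is exactly the observation already made in the paragraph preceding Definition~\ref{def:main:2}: if $\Sigma$ is (nearly, weakly, marginally) future trapped then $\overrightarrow H$ is causal and future pointing wherever it is nonzero, and $\partial_t$ is a past-pointing timelike vector field (by condition~(a) of the Bernal--S\'anchez splitting, inherited by orthogonally splitted spacetimes), so the Lorentzian inner product of a future causal vector with a past timelike vector is $\leq 0$; at points where $\overrightarrow H=0$ the inequality is trivial. With both~\ref{thm:non_ex_min-1} and~\ref{thm:non-ex-min-3} in hand, Theorem~\ref{thm:non_ex_min} applies verbatim and yields that $\tau$ cannot attain a strict local minimum on $\Sigma$, which is the claim. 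The past/non-expanding case follows by replacing $\partial_t$ by $-\partial_t$ (equivalently reversing the time orientation), turning future trapped into past trapped and local minimum into local maximum, so no separate argument is needed.
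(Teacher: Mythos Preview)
Your approach is exactly the paper's: Theorem~\ref{trappedMinMax} is stated there as an ``immediate consequence of Theorem~\ref{thm:non_ex_min}'', and your proof simply spells out why the hypotheses of that theorem are met. Two small slips in your justifications are worth fixing, though the conclusions you draw from them are correct.

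First, in checking $\mathrm{tr}\,\xi\geq 0$ you claim $(\mathcal{L}_{\partial_t}\overline{g})(\partial_t,\partial_t)=0$; in fact $(\mathcal{L}_{\partial_t}\overline{g})(\partial_t,\partial_t)=\partial_t\big(\overline{g}(\partial_t,\partial_t)\big)=-\partial_t\beta$, which need not vanish. The detour through $\mathcal{L}_{\partial_t}\overline{g}$ is unnecessary: the non-contracting hypothesis already says $(\mathcal{L}_{\partial_t}g_t)(v,v)\geq 0$ for every $v\in T_xF$, and each $d\pi_F(E_i)$ lies in $T_xF$, so each summand in $\mathrm{tr}\,\xi$ is nonnegative directly.

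Second, condition~(a) of the Bernal--S\'anchez splitting says that $\nabla\mathcal{T}$, not $\partial_t$, is past-pointing; since $\nabla t=-\beta^{-1}\partial_t$, the vector field $\partial_t$ is \emph{future}-pointing. Your sentence ``the Lorentzian inner product of a future causal vector with a past timelike vector is $\leq 0$'' is also wrong in the $(-,+,\dots,+)$ signature used here. The correct (and standard) statement is that $\overline{g}(u,v)\leq 0$ when $u$ is future timelike and $v$ is future causal, which with $\partial_t$ future-pointing gives $\overline{g}(\partial_t,\overrightarrow H)\leq 0$ as desired. Your two sign errors happen to cancel, so the final inequality is right, but the reasoning should be corrected.
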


\begin{remark}\label{rmk:non-exist-extremal}
	{In all the above results we have considered the spacetime to be non-contracting or non-expanding, or we have considered some conditions which guarantee this fact. Observe that if we ask the stronger hypothesis that the spacetime is expanding or contracting, we can also conclude the non-existence of extremal surfaces satisfying the same assumptions.}
\end{remark}

{We can also obtain a nice direct consequence of Theorem~\ref{thm:main:3} in the case of extremal surfaces.}

\begin{corollary}\label{cor:ext_phase_change}
	{Let $(\overline{M}^4,\overline{g})$ be an orthogonally splitted spacetime having a local contracting phase change at a certain $t_0\in\mathbb{R}$. Then, any parabolic extremal surface $\Sigma$ in $\overline{M}$ which is bounded away from the infinity and such that $\overline{M}$ has moderate proper time rate over it is contained in the slice $F_{t_0}$.}
\end{corollary}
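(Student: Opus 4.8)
The plan is to derive Corollary~\ref{cor:ext_phase_change} directly from Theorem~\ref{thm:main:3} by verifying that each of its four hypotheses holds in the present setting. First I would unwind the definitions: a \emph{local contracting phase change} at $t_0$ means (by the definition in Section~\ref{sec:orthogospace}) that there is some $\epsilon>0$ so that for $t_0-\epsilon<t<t_0$ the spacetime is expanding and for $t_0<t<t_0+\epsilon$ it is contracting. Recalling \eqref{eq:expanding} and \eqref{eq:contracting}, expanding at $t$ gives $\partial_t\beta\leq 0$ and $(\mathcal{L}_{\partial_t}g_t)(v,v)>0$, while contracting at $t$ gives $\partial_t\beta\geq 0$ and $(\mathcal{L}_{\partial_t}g_t)(v,v)<0$. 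In particular, for the restriction to $\Sigma$, for $t<t_0$ we have $\partial_t(\beta_{|\Sigma})\leq 0$ and, since $\mathrm{tr}\,\xi$ is a sum of terms of the form $(\mathcal{L}_{\partial_t}g_t)(d\pi_F(E_i),d\pi_F(E_i))$ (see \eqref{eq:14}--\eqref{eq:15}) with the $d\pi_F(E_i)$ spacelike, we get $\mathrm{tr}\,\xi\geq 0$; symmetrically, for $t>t_0$ we obtain $\partial_t(\beta_{|\Sigma})\geq 0$ and $\mathrm{tr}\,\xi\leq 0$. This is exactly assumption~\ref{thm:main:3-1} of Theorem~\ref{thm:main:3}.

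Next I would dispatch the remaining hypotheses, which are essentially immediate. Assumption~\ref{thm:main:3-2}, that $\overline{M}$ has moderate proper time rate over $\Sigma$, is stated directly in the corollary. Assumption~\ref{thm:main:3-4}, that $\Sigma$ is bounded away from the infinity, is likewise part of the corollary's hypothesis. The crucial observation, already flagged in the remark following Theorem~\ref{thm:main:3}, is that assumption~\ref{thm:main:3-3}, namely $(\tau-t_0)\overline{g}(\partial_t,\overrightarrow{H})\geq 0$, holds trivially for an extremal surface: by Definition~\ref{def:main:2}\ref{def:ftr5}, $\overrightarrow{H}=0$ all over $\Sigma$, hence $\overline{g}(\partial_t,\overrightarrow{H})=0$ identically and the product vanishes, so in particular it is non-negative. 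Finally, $\Sigma$ is assumed parabolic, matching the standing hypothesis of Theorem~\ref{thm:main:3}.

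With all four hypotheses verified, Theorem~\ref{thm:main:3} applies and yields directly that $\Sigma$ is contained in the spacelike slice $F_{t_0}=\{t_0\}\times F$, which is precisely the conclusion of the corollary. One subtlety worth checking is the range of validity of assumption~\ref{thm:main:3-1}: the definition of phase change only controls the sign behaviour of $\mathrm{tr}\,\xi$ and $\partial_t\beta$ on the punctured neighbourhood $(t_0-\epsilon,t_0)\cup(t_0,t_0+\epsilon)$, whereas Theorem~\ref{thm:main:3} is phrased for all $t<t_0$ and all $t>t_0$. However, since $\Sigma$ is bounded away from the infinity it lives in a slab, and the argument in Theorem~\ref{thm:main:3} only tests the sign of $(\tau-t_0)\tilde{\Delta}\tau$ at points actually attained by $\tau$ on $\Sigma$; I would note that the conclusion that $\sigma=(\tau-t_0)^2$ is subharmonic and constant pins $\tau$ to $t_0$, so the behaviour only needs to be controlled near $t_0$, which is exactly what the phase change provides.

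I expect the main (and only genuine) obstacle to be this matching of the neighbourhood on which the phase change is defined against the global sign conditions in Theorem~\ref{thm:main:3}; the honest way to handle it is to observe that one may shrink attention to the slab swept out by $\Sigma$, restrict the interval $I$ to $(t_0-\epsilon,t_0+\epsilon)$, and apply the theorem there, the extremality hypothesis making assumption~\ref{thm:main:3-3} hold on the whole restricted region. Everything else is a routine bookkeeping check of signs, so the proof is essentially a short verification that the hypotheses of Theorem~\ref{thm:main:3} are met, with extremality ensuring~\ref{thm:main:3-3} for free.
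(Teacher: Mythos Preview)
Your approach---verifying the hypotheses of Theorem~\ref{thm:main:3}, with extremality supplying~\ref{thm:main:3-3} for free---is exactly the paper's: the corollary is stated there without proof as a ``direct consequence'' of that theorem, and the remark preceding it already notes that~\ref{thm:main:3-3} holds for maximal submanifolds and that~\ref{thm:main:3-1} ``generalizes'' the phase-change hypothesis.

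You are right to flag the mismatch between the \emph{local} control given by the phase change (only on $(t_0-\epsilon,t_0)\cup(t_0,t_0+\epsilon)$) and the sign requirement in hypothesis~\ref{thm:main:3-1} for all $t<t_0$ and all $t>t_0$. However, your proposed fix does not actually close this gap. Restricting $I$ to $(t_0-\epsilon,t_0+\epsilon)$ is only permissible if $\tau(\Sigma)$ already lies in that interval, which the corollary does not assume; and the alternative justification that ``the behaviour only needs to be controlled near $t_0$'' is circular, since subharmonicity of $\sigma=(\tau-t_0)^2$ must be established at \emph{every} point of $\Sigma$ before one can conclude $\tau\equiv t_0$. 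The paper itself elides this point, so what you have uncovered is a looseness in the corollary's statement rather than a flaw in your reasoning: strictly, one needs either the additional hypothesis $\tau(\Sigma)\subseteq(t_0-\epsilon,t_0+\epsilon)$, or that the expanding/contracting behaviour persists on the full range of $\tau$.
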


\begin{remark}\label{rmk:closed}
	{Although a parabolic manifold is non-compact by definition, as it was remarked in Section~\ref{sec:parabolicity} the notion of parabolicity can be seen as an extension of compactness. In fact, compact manifolds satisfy the parabolicity condition, i.e. any subharmonic function on a compact manifold is necessarily constant. Therefore, all the above results also hold for the case of closed trapped surfaces. Furthermore, it is obvious that any orthogonally splitted spacetime has moderate proper time rate over any closed submanifold.}
\end{remark}

\subsection{Application to cosmological models of interest}
\label{sec:aplicaCos}
As a final section of this paper, we would like to show how the previous results translate when considering different families of spacetimes with physical interest. Our aim is twofold: on the one hand, we want to show that some of the hypotheses in our previous results are simplified for certain specific spacetimes or they are expressed in a more natural way. On the other hand, we will relate our results with some known results on the existing literature.

\smallskip

Let us begin by considering the so-called \emph{standard static spacetimes}. In the notation of this paper, a standard static spacetime is an orthogonally splitted spacetime $(\overline{M},\overline{g})$ as in \eqref{eq:1} where neither $\beta$ nor $g_t$ have dependence on the variable $t$. Therefore, the metric takes the form \[\overline{g}=-\beta dt^2+g,\] where $\beta\in C^{\infty}(F)$ is a positive function and $(F,g)$ is a Riemannian manifold.

From its definition, we can see that any standard static spacetime $(\overline{M},\overline{g})$ is both, a non-expanding and a non-contracting orthogonally splitted spacetime. Moreover, the moderate proper time rate condition can be directly satisfied if we assume for instance the compactness of the surface. Hence, our technique allows us to recover some known results in~\cite{MarsSeno} and in~\cite{Pel}. For instance,

\begin{corollary} {\cite[Proposition 2]{MarsSeno}}
	In a standard static spacetime $(\overline{M}^{4},\overline{g})$, if there exists a extremal closed spacelike surface, it must be  {a minimal surface of} a slice $F_{t_0}$. 
\end{corollary}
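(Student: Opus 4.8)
The plan is to derive this corollary as a direct specialization of Theorem~\ref{thm:main:1} (or equivalently Corollary~\ref{cor:ext_phase_change}) to the standard static setting. First I would verify that a standard static spacetime $(\overline{M}^4,\overline{g})$ with $\overline{g}=-\beta dt^2 + g$ satisfies all the structural hypotheses of Theorem~\ref{thm:main:1}. Since neither $\beta$ nor $g$ depends on $t$, we have $\partial_t(\beta_{|\Sigma})=0$ and $\mathcal{L}_{\partial_t}g_t=0$, whence $\mathrm{tr}\,\xi=0$; thus the spacetime is simultaneously non-contracting and non-expanding, so assumption~\ref{thm:rig_general-1} holds trivially in both directions.

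Next I would address the remaining hypotheses for a \emph{closed} surface $\Sigma$. The moderate proper time rate condition over $\Sigma$ follows at once: on a compact set the continuous positive function $\beta$ attains a positive infimum and a finite supremum, so $\inf_\Sigma\beta>0$ and $\sup_\Sigma\beta<\infty$, giving assumption~\ref{thm:rig_general-2}. The boundedness away from the past (and future) infinity is automatic, since $\tau=\pi_{\mathbb{R}|\Sigma}$ is continuous on the compact $\Sigma$ and hence bounded, covering assumption~\ref{thm:rig_general-4}. For the mean curvature hypothesis~\ref{thm:rig_general-3}, an extremal surface has $\Hm=0$ all over $\Sigma$ by Definition~\ref{def:main:2}\ref{def:ftr5}, so $\overline{g}(\partial_t,\Hm)=0$ satisfies the signed condition in either direction. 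Finally, as observed in Remark~\ref{rmk:closed}, a closed manifold satisfies the parabolicity conclusion because any subharmonic function on a compact manifold is constant, so the parabolicity requirement is met.

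With all hypotheses in place, Theorem~\ref{thm:main:1} (in its extremal clause) yields that $\Sigma$ is contained in a spacelike slice $F_{t_0}=\{t_0\}\times F$. It then remains to upgrade ``contained in a slice'' to ``minimal in the slice.'' For this I would invoke the rigidity conclusion of Theorem~\ref{thm:rig_general}, which gives not only $\tau\equiv t_0$ but also $\overline{g}(\partial_t,\Hm)=0$ together with the dichotomy that either $\Hm$ is spacelike or $\Sigma$ is minimal in $F_{t_0}$. Since for an extremal surface $\Hm=0$ identically, the first alternative is excluded, forcing $\Sigma$ to be minimal in $F_{t_0}$; this uses the decomposition~\eqref{eq:H_desc} applied with $S=F_{t_0}$, under which $\Hm=0$ and $\Sigma\subset F_{t_0}$ imply that the mean curvature vector $\overrightarrow{h}$ of $\Sigma$ \emph{within} $F_{t_0}$ vanishes as well.

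I do not expect a serious obstacle here, as the argument is essentially a checklist verification followed by a citation. The only point requiring mild care is the passage from ``extremal and contained in a slice'' to ``minimal in the slice'': one must observe that when $\Sigma\subset F_{t_0}$, the ambient normal bundle splits into the direction $\partial_t$ and the normal directions inside $F_{t_0}$, and that $\overline{g}(\partial_t,\Hm)=0$ combined with $\Hm=0$ forces the intrinsic mean curvature in $F_{t_0}$ to vanish via~\eqref{eq:H_desc}. This is routine but deserves an explicit sentence so that the term \emph{minimal surface of a slice} is justified rather than merely asserted.
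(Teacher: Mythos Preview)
Your proposal is correct and follows essentially the same approach as the paper: the corollary is presented there as an immediate consequence of Theorem~\ref{thm:main:1} (via Theorem~\ref{thm:rig_general}), after noting that a standard static spacetime is both non-contracting and non-expanding and that compactness of $\Sigma$ handles the moderate proper time rate and boundedness hypotheses (cf.\ Remark~\ref{rmk:closed}). Your explicit justification of ``minimal in the slice'' via the dichotomy in Theorem~\ref{thm:rig_general} and the decomposition~\eqref{eq:H_desc} is exactly the intended route, just spelled out in more detail than the paper does.
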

In a similar way, in the context of standard static spacetimes, Theorem \ref{trappedMinMax} allows us to obtain \cite[Theorem 4]{Pel}.

\medskip

We can perform a similar study by considering a different family included in the so-called \emph{twisted} spacetimes. In this case, we will consider $(\overline{M},\overline{g})$ an orthogonally splitted spacetime as in \eqref{eq:1} where $\beta=1$ and the dependence on $t$ of $g_t$ is encoded by a smooth function on $\overline{M}$. That is,
\begin{equation}
	\label{eq:33}
	\overline{g}=-dt^2+f\,g_0,
\end{equation}
where $f\in C^{\infty}( {\overline{M}})$ is a positive function and $(F,g_0)$ is a Riemannian manifold. Observe that this family includes the well-known \emph{Generalized Robertson-Walker spacetimes}, which are a subfamily where the function $f$ has dependence only on the time variable.

The non-contracting (resp. non-expanding) behaviour of this family of spacetimes is then reduced to the condition $\partial_tf\geq 0$ (resp. $\partial_tf\leq 0$), while the moderate proper time rate assumption is always satisfied. Hence, Theorem \ref{thm:main:1} reads:

\begin{corollary}\label{cor:GRW}
	Let $(\overline{M}^{4},\overline{g})$ be an orthogonally splitted spacetime where $\overline{g}$ takes the form in \eqref{eq:33}. If $\partial_tf\geq 0$ (resp. $\partial_tf\leq 0$), there is no parabolic (nearly, weakly, marginally) future (resp. past) trapped surface in $\overline{M}$ bounded away from the past (resp. future) infinity. Moreover, if there exists some parabolic extremal surface in $\overline{M}$, it is contained in a slice $F_{t_0}$. 
\end{corollary}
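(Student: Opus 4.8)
The plan is to read Corollary~\ref{cor:GRW} as a direct specialization of Theorem~\ref{thm:main:1} to the twisted metric~\eqref{eq:33}, so that the entire argument reduces to verifying that the two global hypotheses of that theorem — being non-contracting (resp.\ non-expanding) and having moderate proper time rate — are captured precisely by the single sign condition $\partial_t f\geq 0$ (resp.\ $\partial_t f\leq 0$). First I would dispose of the moderate proper time rate assumption: the form~\eqref{eq:33} forces $\beta\equiv 1$, so $\inf_{\overline{M}}\beta=\sup_{\overline{M}}\beta=1$ and Definition~\ref{def:mod_prop_time} holds trivially over all of $\overline{M}$.

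Next I would translate the monotonicity condition. Since $g_t=f\,g_0$ and $g_0$ does not depend on $t$, the only $t$-dependence lies in the scalar $f$, whence $\mathcal{L}_{\partial_t}g_t=(\partial_t f)\,g_0$ and therefore $(\mathcal{L}_{\partial_t}g_t)(v,v)=(\partial_t f)\,g_0(v,v)$ for every $v\in T_xF$. Because $g_0$ is Riemannian, $g_0(v,v)>0$ for $v\neq 0$, so the sign of this Lie derivative agrees pointwise with the sign of $\partial_t f$; together with $\partial_t\beta=0$ (which meets both inequalities in~\eqref{eq:expanding} and~\eqref{eq:contracting}), this shows that $\partial_t f\geq 0$ is exactly the non-contracting condition and $\partial_t f\leq 0$ exactly the non-expanding one. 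This is the only computation in the proof, and it is where I would be most careful about sign conventions.

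With both hypotheses verified, the non-existence of parabolic (nearly, weakly, marginally) future (resp.\ past) trapped surfaces bounded away from the past (resp.\ future) infinity, as well as the slice-confinement of parabolic extremal surfaces, follow at once by invoking Theorem~\ref{thm:main:1}. For the extremal clause the relevant observation is that $\overrightarrow{H}=0$ makes $\overline{g}(\partial_t,\overrightarrow{H})=0$, so condition~\ref{thm:rig_general-3} of Theorem~\ref{thm:rig_general} is satisfied with either sign and the rigidity conclusion applies.

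I do not expect a genuine analytic obstacle here, since all the heavy lifting (the Laplacian computation~\eqref{eq:20} and the parabolicity machinery) was already carried out upstream; the corollary is essentially bookkeeping. The one point demanding attention is the matching of orientations: one must check that the direction of the boundedness hypothesis — away from the past versus the future infinity — lines up correctly with the future versus past trapped dichotomy inherited from Theorem~\ref{thm:main:1}. I would also flag that the ``Moreover'' clause about extremal surfaces tacitly carries the same boundedness hypothesis as the first clause (away from the past infinity when $\partial_t f\geq 0$, away from the future infinity when $\partial_t f\leq 0$), since parabolicity forces a superharmonic $\tau$ to be constant only once it is known to be bounded from the appropriate side.
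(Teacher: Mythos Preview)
Your proposal is correct and follows exactly the route the paper takes: it observes that for the metric~\eqref{eq:33} one has $\beta\equiv 1$ (so moderate proper time rate is automatic) and that the non-contracting/non-expanding condition collapses to the sign of $\partial_t f$, then invokes Theorem~\ref{thm:main:1}. Your added remark that the extremal clause tacitly inherits the boundedness hypothesis from Theorem~\ref{thm:main:1} is a valid refinement of the corollary's wording.
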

In the particular case of GRW spacetimes, the conditions for $f$ simply read as $f'\geq 0$ and $f'\leq 0$. Hence, the previous corollary generalizes some known results regarding trapped surfaces in Generalized Robertson-Walker spacetimes such as~ {\cite[Result 5.1]{IB}},~\cite[Corollary 5.2]{ACC} and the non-existence results contained in~\cite[Section 4.1]{Pel2}.  {Similarly, Theorem~\ref{trappedMinMax} applied to GRW spacetimes generalizes~\cite[Result 5.2]{IB} and~\cite[Theorem 3]{Pel2}.}

\medskip

As a final example of interest, we will consider a family included in the doubly twisted spacetimes. Concretely, we will consider $(\overline{M},\overline{g})$ an orthogonally splitted spacetime of the form:
\begin{equation}\label{sphericallysymmetric}
	\overline{M}=I\times J\times F,\qquad \overline{g}=-\beta dt^2 + f_1dr^2 + f_2 g_0,	
\end{equation}
where $I,J$ are real intervals, $f_1,f_2:\overline{M}\rightarrow \mathbb{R}$ are smooth positive functions and $(F,g_0)$ is a $2$-dimensional Riemannian surface. Observe that if $f_1,f_2\in C^{\infty}(J)$, such a metric is the classical expression for an arbitrary static and spherically symmetric spacetime in appropriate coordinates (see \cite{Wald}).

As in the previous family of spacetimes, we can easily encode the non-contracting and non-expanding character of $(\overline{M},\overline{g})$ in terms of the derivatives of the functions $\beta,f_1,f_2$ with respect to the time coordinate, obtaining again a similar result as in Corollary \ref{cor:GRW}  {for trapped surfaces in a doubly twisted spacetime as in~\eqref{sphericallysymmetric}}. However, we would like to focus our attention now on the parabolic assumption. Concretely, we would like to make use of the concepts of isocausality and quasi-isometries developed in Section~\ref{sec:parabolicity}  {to find a condition that ensures the parabolicity of a surface given as a graph as follows}. {Let us consider a fixed $r_0\in J$ and any smooth function $u:F\rightarrow I$ and, similarly as in~\eqref{eq:24}, let us define $\Sigma_{r_0,u}$ by}
\begin{equation}\label{eq:sigma_r0u}
	{\Sigma_{r_0,u}=\left\{(u(x),r_0,x)\in\overline{M}:x\in F\right\},}
\end{equation}
We can then state the following result.

\begin{corollary}\label{cor:twisted}
	{Let $(\overline{M}^4,\overline{g})$ be a non-contracting (resp. non-expanding) orthogonally splitted spacetime as in \eqref{sphericallysymmetric}. Let us assume that there exist $\alpha_1,\alpha_2\in C^{\infty}(I)$ positive functions such that}
	\begin{equation}
		\label{eq:2}
		{\alpha_2\leq \dfrac{f_2}{\beta}\leq \alpha_1}.
	\end{equation}
	{Then, there is no $r_0\in J$ and no function $u:F\rightarrow I$  bounded from below (resp. from above) such that:}
	\begin{enumerate}[label=(\roman*)]
		\item\label{cor:twisted-1} {$\alpha_1(u)$, $\alpha_2(u)$ and $\beta|_{\Sigma_{r_0,u}}$ are bounded functions and bounded away from zero, i.e. with positive infimums,}
		\item\label{cor:twisted-2} {$\left|\nabla^0 u\right|_0\leq k\alpha_2$ for some $0<k<1$, $\nabla^0$ and $|\cdot |_0$ being the gradient operator and the norm in $(F,g_0)$, and} 
		\item\label{cor:twisted-3} {$\Sigma_{r_0,u}$ is a (nearly, weakly, marginally) future (resp. past) trapped surface in $(\overline{M},\overline{g})$.}		
	\end{enumerate}
	\smallskip
	{Furthermore, if it exists $r_0\in J$ and a function $u:F\rightarrow I$ bounded from below (resp. from above) such that~\ref{cor:twisted-1} and~\ref{cor:twisted-2} are satisfied and $\Sigma_{r_0,u}$ is a extremal surface, then $u$ must be constant, i.e., $\Sigma_{r_0,u}=\left\{t_0\right\}\times\left\{r_0\right\}\times F$ for a certain $t_0\in I$.}
\end{corollary}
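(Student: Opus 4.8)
The plan is to reduce the whole statement to Theorem~\ref{thm:main:1}, whose only hypothesis that is not immediately available in this setting is the \emph{parabolicity} of the candidate surface $\Sigma_{r_0,u}$; supplying that parabolicity is exactly what Proposition~\ref{prop:main:2} (together with Remark~\ref{rem:main:1}) is built to do. So the first and main step is to match the three-factor splitting $\overline{M}=I\times J\times F$ of \eqref{sphericallysymmetric} with the two-factor framework of Proposition~\ref{prop:main:2}: I take $F_1=F$ (the $2$-dimensional factor), $F_2=J$, fix $x_2=r_0$, so that $n=2$, $m=1$ and $\Sigma_{r_0,u}$ is precisely a graph $\Sigma_{u,x_2}$ in the sense of \eqref{eq:24}.

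The crucial observation in that step is that, along the graph, the $dr^2$-direction is invisible: a tangent vector associated to $v\in T_xF$ is $(du(v),0,v)$, so the induced metric is $g_\Sigma(v,v)=f_2\,g_0(v,v)-\beta\,(du(v))^2$, with no contribution from $f_1$. This is exactly why Remark~\ref{rem:main:1} is needed: I only have to verify the comparison \eqref{eq:21} on $T_xF$, where $g_t|_{TF}=f_2\,g_0$, and there it reads $\beta\alpha_2\,g_0\le f_2\,g_0\le\beta\alpha_1\,g_0$, i.e. $\alpha_2\le f_2/\beta\le\alpha_1$, which is precisely hypothesis~\eqref{eq:2}. (The background metric $g_0$ of Proposition~\ref{prop:main:2} is taken to be the $g_0$ appearing in \eqref{sphericallysymmetric}.) Then hypothesis~\ref{cor:twisted-2} is the gradient bound~\ref{prop:main:2-3}, and hypothesis~\ref{cor:twisted-1} supplies both that $\beta|_{\Sigma_{r_0,u}}$ is bounded and bounded away from zero (so $\overline{M}$ has moderate proper time rate over $\Sigma_{r_0,u}$, Definition~\ref{def:mod_prop_time}) and that $0<\inf\beta_u\alpha_2(u)$, $\sup\beta_u\alpha_1(u)<\infty$ (hypothesis~\ref{prop:main:2-4}). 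The same lower estimate $g_\Sigma(v,v)\ge(1-k)\beta_u\alpha_2(u)g_0(v,v)>0$ shows the induced metric is positive definite, i.e. $\Sigma_{r_0,u}$ is spacelike. Provided $(F,g_0)$ is parabolic (which holds in the genuinely spherically symmetric case $F=\mathbb{S}^2$, compact hence parabolic by Remark~\ref{rmk:closed}), Proposition~\ref{prop:main:2} then yields that $\Sigma_{r_0,u}$ is a parabolic spacelike surface.

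With parabolicity secured, the second step is a direct application of Theorem~\ref{thm:main:1}, in the sharpened form of the Remark following it (moderate proper time rate is only required over the surface). I argue by contradiction: if such $r_0$ and $u$ existed with \ref{cor:twisted-1}--\ref{cor:twisted-3}, then $\Sigma_{r_0,u}$ would be a parabolic (nearly, weakly, marginally) future (resp. past) trapped surface in the non-contracting (resp. non-expanding) spacetime $\overline{M}$, with moderate proper time rate over it, and bounded away from the past (resp. future) infinity because $u=\tau$ is bounded from below (resp. above) (Definition~\ref{def:bounded}). Theorem~\ref{thm:main:1} forbids exactly this, giving the contradiction and hence the non-existence claim.

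For the final rigidity statement I drop hypothesis~\ref{cor:twisted-3} and assume instead that $\Sigma_{r_0,u}$ is extremal; parabolicity still follows from Step~1, which used only~\ref{cor:twisted-1} and~\ref{cor:twisted-2}, and $\overrightarrow{H}=0$ trivially satisfies the sign condition $\overline{g}(\partial_t,\overrightarrow{H})\le 0$. The ``furthermore'' part of Theorem~\ref{thm:main:1} then places $\Sigma_{r_0,u}$ inside a slice $\{t_0\}\times J\times F$. Since along the graph $r\equiv r_0$ and $t=u(x)$, lying in that slice forces $u(x)=t_0$ for every $x$, i.e. $u$ is constant and $\Sigma_{r_0,u}=\{t_0\}\times\{r_0\}\times F$, as claimed. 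The only genuine obstacle throughout is the bookkeeping of Step~1 --- correctly reducing the three-factor splitting to the two-factor setting of Proposition~\ref{prop:main:2}, recognizing that the $f_1\,dr^2$ term does not enter the induced geometry, and securing parabolicity of the base factor $(F,g_0)$ --- after which the two rigidity/non-existence conclusions drop out of Theorem~\ref{thm:main:1}.
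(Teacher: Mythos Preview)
Your proposal is correct and follows essentially the same route as the paper: establish parabolicity of $\Sigma_{r_0,u}$ via Proposition~\ref{prop:main:2} (invoking Remark~\ref{rem:main:1} so that \eqref{eq:21} need only be checked on $TF$, where it becomes exactly \eqref{eq:2}), and then feed everything into Theorem~\ref{thm:main:1}. You are in fact more explicit than the paper on one point: the parabolicity of the base $(F,g_0)$ is needed for condition~\ref{prop:main:2-2} of Proposition~\ref{prop:main:2}, and while the paper's proof asserts this is ``directly satisfied from the hypothesis of the corollary'', the statement of the corollary does not list it --- your parenthetical remark that it holds automatically in the spherically symmetric case $F=\mathbb{S}^2$ is a fair way to handle this implicit assumption.
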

\begin{proof}
	Our aim is to prove that $\Sigma_{r_0,u}$ is parabolic, so that Theorem \ref{thm:main:1} can be applied. To that end we will apply Proposition \ref{prop:main:2}. Conditions \ref{prop:main:2-2}, \ref{prop:main:2-4} and \ref{prop:main:2-3} of such proposition are directly satisfied from the hypothesis of the corollary, while assumption \ref{prop:main:2-1} follows from~\eqref{eq:2}, taking into account that $g_t=f_2g_0$ and recalling Remark \ref{rem:main:1}.
\end{proof}

\begin{remark}
	Corollary~\ref{cor:twisted}, even if it appears to be quite technical, allows us to show that the non-existence of trapped surfaces (of any kind) is invariant under \emph{time-controlled perturbations} on the warping function. Let us explain this more clearly by assuming that $\beta\equiv 1$. The result ensures, in particular, that any non-contracting $(\overline{M},\overline{g})$ of the form \eqref{sphericallysymmetric} with $\alpha_2\leq f_2\leq \alpha_1$, being $\alpha_1,\alpha_2$ bounded away from zero, has no future trapped surface of the form $\Sigma_{r_0,u}$ bounded away from the past infinite (an analogous result for past trapped surfaces is obtained) unless $\alpha_i(u)$ are unbounded (in particular, the image of $u$ cannot be a pre-compact set).	
\end{remark}

\section*{Acknowledgements}

The authors would like to express their heartfelt gratitude to the referees for their valuable comments, which have significantly enhanced the quality of the manuscript. All authors are partially supported by the Spanish MCIN Grant PID2021-126217NB-I00 with FEDER funds and by the Regional Government of Andalusia ERDEF Project P20-01391.

\medskip

\bibliographystyle{amsplain}

\end{document}